\newtheorem{theorem}{Theorem}[section]
\newtheorem{corollary}[theorem]{Corollary}
\newtheorem{proposition}[theorem]{Proposition}
\newtheorem{lemma}[theorem]{Lemma}
\numberwithin{equation}{section}
\theoremstyle{definition}
\theoremstyle{remark}
\newtheorem*{remark*}{Remark}
\newcommand{\1}[1]{{\mathbf 1}{\{#1\}}}
\newcommand{\2}[1]{{\mathbf 1}{(#1)}}
\newcommand{\R}{{\mathbb R}}
\newcommand{\Z}{{\mathbb Z}}
\newcommand{\N}{{\mathbb N}}
\newcommand{\ZP}{{\mathbb Z}_+}
\newcommand{\RP}{{\mathbb R}_+}
\DeclareMathOperator{\Exp}{\mathbb{E}}
\renewcommand{\Pr}{{\mathbb P}}
\newcommand{\eps}{\varepsilon}
\newcommand{\re}{{\mathrm{e}}}
\newcommand{\rc}{{\mathrm{c}}}
\newcommand{\ud}{{\mathrm d}}
\newcommand{\cF}{{\mathcal F}}
\newcommand{\cG}{{\mathcal G}}
\newcommand{\as}{\ \text{a.s.}}
\newcommand{\io}{\ \text{i.o.}}
\newcommand{\tod}{\overset{\text{d}}{\longrightarrow}}
\newcommand{\bigmid}{\; \bigl| \;}
\newcommand{\Bigmid}{\; \Bigl| \;}
\newcommand{\eqd}{\overset{d}{=}}
\def\namedlabel#1#2{\begingroup  
    (#2)%
    \def\@currentlabel{#2}%
    \phantomsection\label{#1}\endgroup
}
\begin{document}

\title{The critical greedy server on the integers is recurrent}
\author{James R.\ Cruise\footnote{Department of Actuarial Mathematics \& Statistics, Heriot-Watt University, Riccarton, Edinburgh EH14 4AS.} \and Andrew R.\ Wade\footnote{Department of Mathematical Sciences, Durham University, South Road, Durham DH1 3LE.}}

\date{\today}
\maketitle

\begin{abstract}
Each site of $\Z$ hosts a queue with arrival rate $\lambda$. A single server, starting at the origin,
serves its current queue at rate $\mu$ until that queue is empty, and then moves to the longest neighbouring queue.
In the critical case $\lambda = \mu$,
we show that the server returns to every site infinitely often. We also give a sharp iterated logarithm
result for the server's position. Important ingredients in the proofs are that the
times between successive queues being emptied exhibit doubly exponential growth,
and that the  probability that the server changes its direction is asymptotically equal to $1/4$.
\end{abstract}

\medskip

\noindent
{\em Key words:} Greedy server, queueing system, recurrence, iterated logarithm law.

\medskip

\noindent
{\em AMS Subject Classification:} 60J27 (Primary) 60K25, 68M20, 90B22 (Secondary)

\section{Introduction and main results}
\label{sec:intro}

The following continuous-time stochastic model was introduced by Kurkova and Menshikov~\cite{km}.
Each site of the one-dimensional integer lattice $\Z$
is associated with a queue. Each queue has an independent Poisson arrival
stream of rate $\lambda \in (0,\infty)$.
The system has a single server, which starts at the origin at time $0$. The server serves
the queue at its current site exhaustively at rate $\mu \in (0,\infty)$.
If the queue at the current site is empty at time $t \geq 0$, the server moves to one of the two
neighbouring sites using a greedy policy: it chooses to move to the site
with the longest queue (measured at time $t$), randomly breaking any tie.
The server moves (deterministically) at unit speed, and so arrives at the new
site at time $t+1$, at which time it starts to serve the new queue.
Of interest is the asymptotic behaviour of $S(t)$, the location of the server at time $t \geq 0$.

There are 3 cases. The least interesting case is when $\lambda > \mu$. In this case, any queue under service is transient, so each time that the server starts serving a queue, there
 is (uniformly) positive probability that the server remains at the site for all time.
 Thus with probability 1, the server changes site only finitely many times,
 eventually remaining at one site for ever, so $S(t)$ converges almost surely (a.s.) See Theorem~1.1 of~\cite{km}.

The main object of study in~\cite{km} is the case $\lambda < \mu$.
Theorem 1.2 of \cite{km} shows that in this case the server changes its
 direction only finitely many times, so that the server
 eventually moves from site to site in a single direction, and $S(t) \to \pm \infty$ a.s.
The intuition behind this result is as follows. Any queue under service is now recurrent, so becomes emptied
 in finite time.
 Consider the server's first visit to site $x >0$ at time $t$ (say).
 At time $t$, with very high probability, the queue at $x-1$ will be essentially empty,
 while the queues at $x$ and $x+1$ will have lengths concentrated about $\lambda t$.
It takes time  about 
$(\lambda/\mu) t$ for the server
 to empty the queue at site $x$, and in this time there will be about $(\lambda^2/\mu) t$
 new arrivals at the queues at sites $x-1$ and $x+1$.
 So by the time the queue at site $x$ is emptied,
 the queue lengths at sites $x-1$ and $x+1$
 are about $(\lambda^2/\mu) t$ and $(\lambda + (\lambda^2/\mu)) t$
 respectively. The fluctuations are of order $t^{1/2}$, and so with very high probability, the server will choose to go to site $x+1$ next. 
 
In the present paper, we study the critical case $\lambda = \mu$, which was left largely open in~\cite{km}.
It is clear that this case is rather more delicate. Again, any queue under service is   recurrent.
 But now an attempt to follow the idea of the argument sketched
 in the previous case reveals a new issue.
 Once again, upon the server's first arrival at site $x$,
 the queues at $x$ and $x+1$ will have lengths
  about $\lambda t$ while the queue at site $x-1$ will be essentially empty.
  But now the queue at site $x$ is critically recurrent,
  and so typically takes time of order $t^2$ to empty.
  In this time, the fluctuations in the new arrivals at sites
  $x -1$ and $x+1$ are of order $t$, i.e., on the same scale
  as the initial difference in queue lengths. So it seems likely that the server will change direction many times;
an understanding of the details of this behaviour seems necessary to obtain the asymptotic
behaviour of the server.

In the case $\lambda = \mu$, Menshikov and Kurkova~\cite{km} proved that
 \begin{equation}
\label{eq:limsup}   \limsup_{t \to \infty} | S(t) | = + \infty , \as, \end{equation}
showing that the server does not get stuck.
Our main result is that the server is \emph{recurrent}, in the sense
that it returns to every site infinitely many times:

\begin{theorem}
\label{thm:recurrent}
Suppose that $\lambda = \mu \in (0,\infty)$. Then, a.s., for every $x \in \R$, the set
$\{ t \geq 0 : S(t) = x \}$ is unbounded.
\end{theorem}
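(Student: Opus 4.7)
The trajectory $t \mapsto S(t)$ is continuous, being piecewise-constant while a queue is being served and piecewise-linear at unit speed during each inter-site transit. By the intermediate value theorem applied on the transit intervals, it therefore suffices to prove $\limsup_{t\to\infty} S(t) = +\infty$ and $\liminf_{t\to\infty} S(t) = -\infty$ almost surely; equation~\eqref{eq:limsup} already provides one-sided unboundedness of $|S|$, and the task is to upgrade this to two-sided unboundedness of $S$.

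The plan is to analyse the embedded walk at queue-completion epochs. Let $0 = \sigma_0 < \sigma_1 < \cdots$ denote the times at which the server arrives at a fresh site, and set $X_n := S(\sigma_n) \in \Z$ with signed step $D_n := X_n - X_{n-1} \in \{-1,+1\}$. The two ingredients highlighted in the abstract---namely, that $\Pr(D_{n+1} \neq D_n \mid \mathcal{F}_{\sigma_n}) \to 1/4$ in probability and that $(\sigma_n)$ grows doubly exponentially---combined with a conditional Borel-Cantelli argument, yield that direction changes occur at infinitely many indices almost surely. Let $\tau_1 < \tau_2 < \cdots$ be the reversal indices and $Y_k := X_{\tau_k}$ the server's positions at the successive turning points.

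Between consecutive reversals the walk $(X_n)$ is monotone, so the signed differences $Y_k - Y_{k-1}$ alternate in sign and their moduli $L_k := |Y_k - Y_{k-1}|$ are the successive monotone-run lengths. I would aim to show that $(L_k)$ is asymptotically an i.i.d.\ sequence of Geometric$(1/4)$ random variables; then the pair sums
\[
Y_{2m} \;=\; D_1 \sum_{j=1}^m (L_{2j-1} - L_{2j})
\]
form asymptotically a centred $\Z$-valued random walk with finite-variance increments. The one-dimensional Chung-Fuchs recurrence criterion, applied via coupling to an exact-Geometric$(1/4)$ reference chain, then gives $\limsup_m Y_{2m} = +\infty$ and $\liminf_m Y_{2m} = -\infty$ almost surely. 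Since $(X_n)$ transits monotonically between consecutive turning points, the same two-sided unboundedness transfers to $(X_n)$, and finally, by continuity of $S(\cdot)$, to the continuous-time trajectory, giving the theorem.

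The main obstacle is justifying the asymptotic i.i.d.\ Geometric$(1/4)$ structure of the run-length sequence $(L_k)$. The direction-change probability $1/4$ is only an asymptotic value, and the evolving queue-length profile drives a subtle non-Markovian dependence between successive runs. This is precisely where the doubly-exponential growth of $(\sigma_n)$ becomes indispensable: between two direction changes the time elapsed is so large that the temporarily inactive side of the lattice has ample opportunity to relax, so that its configuration is asymptotically independent of the past, whence the effective run lengths decouple. Rigorously carrying this out---presumably via a careful coupling of the embedded chain with a reference chain having exactly Geometric$(1/4)$ run lengths---is where the technical weight of the proof lies.
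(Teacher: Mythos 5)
Your overall strategy --- decompose the embedded walk into monotone runs, argue that the run lengths are asymptotically i.i.d.\ Geometric$(1/4)$, and conclude by Chung--Fuchs applied to the walk of turning points --- is genuinely different from the paper's. The paper instead builds the compensated functional $Y_n = X_n + 2\cdot\1{\eta_n = +1}$, shows via the turning-probability estimate that $Y_n$ is a martingale up to an a.s.\ summable drift, and then invokes the convergence-or-oscillation dichotomy for bounded-increment martingales: the convergence alternative is excluded by Kurkova--Menshikov's result~\eqref{eq:limsup} that $\limsup_t |S(t)| = \infty$, so $X_n$ oscillates. The point of that construction is precisely to \emph{avoid} having to establish any independence or decoupling of successive runs; all that is needed is the one-step conditional turning probability together with an external input ruling out convergence. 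Your route, if completed, would not need~\eqref{eq:limsup} at all, which is a genuine structural difference.

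However, as written your proposal has a real gap exactly where you flag it, and the gap is not merely one of effort. First, you state the turning-probability convergence only \emph{in probability}; that is too weak for any coupling or conditional Borel--Cantelli argument to terminate. What is actually true (Proposition~\ref{prop:turning}) is that $|q_n - 1/4| \leq \re^{-n^{\eps}}$ a.s.\ for all but finitely many $n$, i.e.\ the error is a.s.\ summable, and this quantitative rate is indispensable: with it one can maximally couple the turn/no-turn indicators step by step with those of a correlated random walk having turning probability exactly $1/4$, and L\'evy's Borel--Cantelli gives only finitely many discrepancies, after which your direction sequence agrees (up to a fixed global sign) with the reference chain, whose run lengths really are i.i.d.\ Geometric$(1/4)$; Chung--Fuchs then applies to the reference chain and oscillation transfers back. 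Without the summable rate, ``asymptotically i.i.d.'' has no content that Chung--Fuchs can use, since that theorem requires genuine i.i.d.\ increments (or an exact coupling to such). Second, even granting the rate, your sketch conflates two sources of dependence: the turning probabilities $q_n$ are random and $\cF_{n-1}$-measurable, so the turn indicators within and across runs are not conditionally i.i.d.; the coupling must be done indicator by indicator against the conditional laws, not run by run. So the architecture is salvageable, but the central step is missing and the hypothesis you assume is strictly weaker than what the argument needs.
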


We also establish the following result
on the 
growth rate of $S(t)$.

\begin{theorem}
\label{thm:growth}
Suppose that $\lambda = \mu \in (0,\infty)$. Then, a.s.,
\begin{align*}
\limsup_{t \to \infty} \frac{S(t)}{\sqrt{\log \log t \log \log \log \log t}} & =   \sqrt{\frac{6}{\log 2}}, \text{ and}\\
\liminf_{t \to \infty} \frac{S(t)}{\sqrt{\log \log t \log \log \log \log t}} & = - \sqrt{\frac{6}{\log 2}}.
\end{align*}
\end{theorem}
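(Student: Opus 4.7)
The plan is to reduce Theorem~\ref{thm:growth} to an LIL for the embedded walk indexed by successive queue-emptying epochs, and then transfer back to continuous time. Write $T_n$ for the time at which the $n$th queue served by the server becomes empty, $X_n := S(T_n) \in \Z$ for the server's position at that moment, and $\xi_n := X_{n+1} - X_n \in \{-1,+1\}$ for the $(n+1)$th signed step. The two ``important ingredients'' advertised in the abstract, which I would first establish as auxiliary propositions, are: a sharp doubly exponential growth law $(\log \log T_n)/n \to \log 2$ almost surely (the exponent $\log 2$ comes from the fact that on reaching a fresh site the server finds a queue of height of order $T_n$, and a critical queue of initial height $\ell$ drains in time of order $\ell^2$, so $\log T_{n+1} \approx 2 \log T_n$); and the asymptotic flip probability $\Pr(\xi_{n+1} = -\xi_n \mid \cF_n) \to 1/4$, in a suitably quantitative sense.

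Given these, $(\xi_n)$ behaves asymptotically like a two-state Markov chain on $\{-1,+1\}$ with flip probability $p = 1/4$, hence with spectral gap $1 - 2p = 1/2$ and stationary correlations $\Exp[\xi_m \xi_n] \approx 2^{-|n-m|}$. The long-run variance of $X_n = \sum_{k=1}^n \xi_k$ is therefore
\[
\sigma^2 \;=\; 1 + 2 \sum_{k \geq 1} 2^{-k} \;=\; 3,
\]
and an LIL for $X_n$ (obtained, say, by a Poisson-equation martingale decomposition of $(\xi_n)$, or by Skorokhod embedding after a functional CLT) should yield
\[
\limsup_{n \to \infty} \frac{X_n}{\sqrt{2 \sigma^2 n \log \log n}} \;=\; 1 \;=\; - \liminf_{n \to \infty} \frac{X_n}{\sqrt{2 \sigma^2 n \log \log n}} \qquad \text{a.s.}
\]
Since $|S(t) - X_n| \leq 1$ whenever $t \in [T_n, T_{n+1}]$, substituting $n \sim (\log \log t)/\log 2$ and hence $\log \log n \sim \log \log \log \log t$ converts $\sqrt{6 n \log \log n}$ into $\sqrt{6/\log 2} \cdot \sqrt{\log \log t \cdot \log \log \log \log t}$, producing the stated constants for both $\limsup$ and $\liminf$.

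The main technical obstacle is the LIL itself: the chain $(\xi_n)$ is only \emph{asymptotically} Markov, since the flip probability at step $n$ depends on the random profile of queue heights just before $T_n$. A robust LIL is therefore required which tolerates the perturbation, for instance by coupling $(\xi_n)$ to an idealised exact Markov chain with summable total-variation error, or by applying the LIL along a sparse subsequence of indices and filling in via Borel--Cantelli. A secondary but equally crucial point is that the sharp constant $\log 2$ in the doubly exponential growth must be pinned down, not merely the order of magnitude, because it feeds directly into the final constant $\sqrt{6/\log 2}$; this will demand a genuine two-sided estimate on the heights of the newly reached queues relative to $T_n$, not just an upper bound.
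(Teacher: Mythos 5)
Your proposal follows essentially the same route as the paper: the embedded walk $(X_n)$ observed at queue-emptying epochs, two-sided doubly exponential bounds on $T_n$ yielding $N_t/\log\log t \to 1/\log 2$, the quantitative convergence of the turning probability to $1/4$, a Poisson-equation corrector (the paper uses $f(x,i) = x + 2\cdot\mathbf{1}\{i=1\}$, whose conditional increment variance tends to $1+8q=3$, matching your $\sigma^2=3$) combined with Stout's martingale law of the iterated logarithm, and finally the substitution $n \sim \log\log t/\log 2$. The two technical obstacles you flag --- an LIL robust to the non-Markovian perturbation, and the sharp two-sided constant $\log 2$ --- are precisely the ones the paper resolves, via a summable rate $|q_n - 1/4| \leq \mathrm{e}^{-n^\eps}$ and matching upper and lower growth estimates for $\tau_n$.
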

 
The rest of this paper is devoted to the proofs of Theorems~\ref{thm:recurrent} and~\ref{thm:growth}.
We give a concrete construction of the process, via a discrete-time process that is the basic object of study in this paper,
in Section~\ref{sec:discrete}. In Section~\ref{sec:discrete} we also describe the main steps in the proof and give the outline of the rest of the paper.
 
The greedy server on $\Z$ is a variant of the \emph{greedy server} problem
introduced in~\cite{cg} and surveyed in~\cite{rnfk},
in which a server greedily moves from 
job to job arriving randomly in some space, such as on a line or a circle. 
Also related is the so-called \emph{greedy walk} problem~\cite{bfl}.
These models have received significant attention over several decades,
in part because the dynamics of the server possess features of both
self-interacting processes and processes in random environments, which remain
very active topics of current research, and because
the problems they pose are challenging. A number of open problems remain: see e.g.~\cite{rnfk,bfl}.

A contiuum analogue
of our problem on $\Z$ is the greedy walk on $\R$, for which it was recently shown that the server
escapes to infinity~\cite{frs}, in contrast to our Theorem~\ref{thm:recurrent}.

\section{Discrete-time process and paper outline}
\label{sec:discrete}

For the remainder of the paper we fix $\lambda = \mu \in (0,\infty)$. We write $\ZP := \{0,1,2,\ldots\}$
and $\N := \{1,2,3,\ldots\}$.

We will study the continuous-time process described in Section~\ref{sec:intro}
via a discrete-time process obtained by observing the full process at the $n$th time at which the server
empties a queue. Consider a Markov process $\Psi_n = (Q_n , X_n, T_n )$ where
$Q_n = ( Q_n (x) )_{x \in \Z} \in  ( \ZP\cup \{ * \} )^\Z$, $X_n \in \Z$,
and $T_n \in \RP$.
If $Q_n (x) \in \ZP$, that is the number of customers at queue $x \in \Z$; if $Q_n(x) = *$ then this indicates that
the queue at $x \in \Z$ has yet to be inspected by the server. The coordinate $X_n$ represents the location of the server when
a queue is emptied for an $n$th time, and $T_n$ represents the total time that has elapsed (i.e.~the sum of
all the services times plus the travel times up to this point).

Set $Q_0 (x) := 0$ for $x \in \{-1,0,+1\}$ and $Q_0 (x) := *$ for $| x | \geq 2$; set $X_0 := 0$ and $T_0:= 0$.
We describe the  law of this process by its Markovian transitions. The random ingredients that go into this description are as follows.
Let $\xi_1, \xi_2, \ldots$ be i.i.d.~with $\Pr ( \xi_1 = +1) = \Pr ( \xi_1 = -1) = 1/2$ (these will be the tie-breaking variables).

We write $P(\kappa)$ to denote a Poisson random variable with mean $\kappa \in \RP$;
for a random variable $W$ on $\RP$ we write $P (W)$ to denote a random variable that,
conditional on $W$, has a Poisson distribution with mean $W$.

Let $(Z_t, t \in \RP)$ denote an $M/M/1$ queue with arrival rate $\lambda$ and
service rate $\mu$, with $Z_0 = k \in \ZP$ initial customers; let $\zeta (k)$ denote 
a random variable distributed as $\inf \{ t \geq 0 : Z_t = 0 \}$, the time to empty
for the queue. Similarly, conditional on a random variable $W \in \ZP$, $\zeta (W)$ is distributed as the time to empty 
the queue started from $Z_0 = W$.

Given $(Q_n, X_n, T_n)$, generate $(Q_{n+1}, X_{n+1}, T_{n+1} )$ as follows.
\begin{itemize}
\item
Define
\[ \eta_{n+1} := \begin{cases} +1 \text{ if } Q_n (X_n +1 ) > Q_n (X_n -1) ,\\
-1 \text{ if } Q_n (X_n -1 ) > Q_n (X_n +1 ) .\end{cases} \]
If $Q_n (X_n+1) = Q_n (X_n -1 )$, then take $\eta_{n+1} = \xi_n$.
\item
Let $X_{n+1} = X_n + \eta_{n+1}$.
\item
Let $\tau_{n+1}$ be distributed as $1$ plus $\zeta ( Q_n ( X_{n+1} ) + P ( \lambda ) )$.
\item
Let $T_{n+1} = T_n  + \tau_{n+1}$.
\item
For every $k \in \Z \setminus \{ X_{n+1} \}$ such that $Q_n (k) \neq *$, take $Q_{n+1} (k)$ to be 
distributed as $Q_n(k) + P (\lambda \tau_{n+1})$, independently for each $k$.
\item
If $Q_n (X_{n+1} +1 ) = *$ then let $Q_{n+1} (X_{n+1} +1)$ be distributed as $P (\lambda T_{n+1} )$, and if
 $Q_n (X_{n+1} -1 ) = *$ then let $Q_{n+1} (X_{n+1} -1)$ be distributed as $P (\lambda T_{n+1} )$ (independently).
\item
Set $Q_{n+1} (X_{n+1} ) = 0$.
\end{itemize}
Note that $Q_n (X_n) =0$ for all $n$, and $T_n = \sum_{k=1}^n \tau_k$.
Let $\cF_n := \sigma(\Psi_0, \xi_0, \Psi_1, \xi_1, \ldots, \Psi_n, \xi_n)$. Clearly both
$\tau_n = T_n - T_{n-1}$ and $\eta_n = X_n - X_{n-1}$ are $\cF_n$-measurable,
but it is important to observe that $X_{n+1}$ and $\eta_{n+1}$ are also $\cF_n$-measurable.
Thus  $(X_n, \eta_n)$ is $\cF_{n-1}$ measurable. Note that $\tau_n \geq 1$, a.s., so that $T_n \geq n$, a.s.

Let $N_t := \max \{ n \in \ZP : T_n \leq t \}$ denote the number of times that a queue
has been emptied by time $t$. Since $T_n \to \infty$ a.s., we have $N_t < \infty$
a.s.~for all $t \in \RP$;
indeed, $N_t \leq T_{N_t} \leq t$, a.s. 
Moreover, $N_t$ is nondecreasing in $t$. Note $T_{N_t +1} > t$.
Thus if $N_t \to N < \infty$, we have $T_{N+1} = \infty$ which contradicts $T_n < \infty$ for all $n$;
hence $N_t \to \infty$ a.s.~as $t \to \infty$.

Observe that $X_{N_t}$ is the most recent queue that was emptied  
prior to time $t$, and $X_{N_t +1}$ is the next queue to be emptied after time $t$.
Also, $T_{N_t} \leq t$ is the time at which the most recently emptied queue was emptied.
It follows that we have the representation
\begin{equation}
\label{eq:S}
S (t) = \begin{cases} X_{N_t +1} & \text{if } t - T_{N_t} \geq 1 , \\
X_{N_t} + (t-T_{N_t}) (X_{N_t +1} - X_{N_t} ) & \text{if } t - T_{N_t} \leq 1 .\end{cases}
\end{equation}

We end this section by outlining the main steps in the proofs of Theorems~\ref{thm:recurrent}
and~\ref{thm:growth}, and some of the underlying intuition.
The first key ingredient is that $\tau_n$ and $T_n$ exhibit \emph{doubly exponential} growth (see Propositions~\ref{prop:lower-bound} and~\ref{prop:upper-bound}).
This very rapid growth in the time-scales suggests an effective memorylessness for the system:
the configuration of the discrete-time system more than one or two time-steps ago is not important.
This provides the intuition behind the second key ingredient in proving Theorem~\ref{thm:recurrent}, which 
is establishing (in Proposition~\ref{prop:turning})
that the \emph{turning probability} converges: remarkably,
\begin{equation}
\label{q-limit} \Pr ( \eta_{n+1} \neq \eta_n \mid \cF_{n-1} ) \to \frac{1}{4} , \as \end{equation}
 Thus the server's motion is asymptotically
similar to the Gillis--Domb--Fisher \emph{correlated random walk}~\cite{cr}. 
In fact, more than convergence in~\eqref{q-limit} is necessary: we need a bound on the \emph{rate of convergence} with $n$.
The double-exponential growth of the time-scales means that fairly rough estimates are enough.
The double-exponential growth is also the origin of the iterated logarithm in Theorem~\ref{thm:growth};
the precise
value of the constant comes in part from the precise value of~\eqref{q-limit}.
A technical device central to the proofs of both theorems is the construction of a function $f(X_n, \eta_n)$ of the process
that is close to a martingale.

The outline of the rest of the paper is as follows.
Section~\ref{sec:queue} collects some results
on the random variables $\zeta(k)$ that we will need in our analysis.
Section~\ref{sec:time} contains the key estimates on the growth of $\tau_n$ and $T_n$.
Section~\ref{sec:turning} contains the convergence result for the turning probability.
Section~\ref{sec:proofs} contains the martingale construction that allows
us to complete the proofs of Theorems~\ref{thm:recurrent}
and~\ref{thm:growth}.
Finally, Appendix~\ref{sec:aux} collects a couple of auxiliary results used in the analysis.

\section{The critically-loaded queue}
\label{sec:queue}

Let $(Z_t, t \in \RP)$ be a continuous-time symmetric
simple random walk on $\Z$ with jump rate $2\lambda$, i.e., for any $x \in \Z$,
for all $t \in \RP$,
\[ \Pr ( Z_{t+h} = x \pm 1 \mid Z_t = x ) = \lambda h + o(h) ,\]
as $h \to 0$.
Suppose that $Z_0 = k \in \ZP$ and let $\zeta (k) := \inf \{ t \geq 0 : Z_t = 0 \}$, the time to reach $0$
started from $k$. Note that up until $\zeta(k)$, $Z_t$ is distributed as the number of customers in an $M/M/1$ queue with arrival rate and service rate both equal to $\lambda \in (0,\infty)$;
so $\zeta(k)$ is the time to empty such an $M/M/1$ queue, started from $k \in \ZP$ initial customers,
as described in Section~\ref{sec:discrete}.

First we collect several straightforward results about $\zeta(k)$ that we need in the rest of the paper.

\begin{lemma}
\label{lem:hittingtime}
\begin{itemize}
\item[(a)] For $k \geq \ell \geq 0$, $\zeta(k)$ stochastically dominates $\zeta(\ell)$.
\item[(b)] For any $\alpha \in (0,2)$ and $c \in (0,\infty)$ there exist $\eps >0$ and $k_1 \in \N$ such that
\[ \Pr ( \zeta(k) \leq c k^\alpha ) \leq \exp \{ -k^\eps \} , \text{ for all } k \geq k_1.\]
\item[(c)] For any $\beta \in (2, \infty)$ and $c \in (0,\infty)$ there exist $\eps >0$ and $k_2 \in \N$ such that
\[ \Pr ( \zeta(k) \geq c k^\beta ) \leq k^{-\eps } , \text{ for all } k \geq k_2.\]
\end{itemize}
\end{lemma}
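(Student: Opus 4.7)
The plan is to reduce all three parts to tail estimates for the single-descent time $\zeta(1)$, via a sum decomposition. By the strong Markov property and translation invariance, if $T_j$ denotes the time for $Z$ to first descend from level $j$ to level $j-1$, then the $T_j$ are i.i.d.\ copies of $\zeta(1)$ and
\[
\zeta(k) \eqd \sum_{j=1}^k T_j .
\]
Part (a) is then immediate: $\zeta(k) \eqd \zeta(\ell) + \sum_{j=\ell+1}^k T_j$, and the added term is independent and non-negative.

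For part (b), the key observation is that, since the summands are non-negative, $\sum_{j=1}^k T_j \leq c k^\alpha$ forces each $T_j \leq c k^\alpha$, so by independence
\[
\Pr\bigl( \zeta(k) \leq c k^\alpha \bigr) \leq \Pr\bigl( \zeta(1) \leq c k^\alpha \bigr)^k .
\]
I would then invoke the classical lower bound $\Pr(\zeta(1) > t) \geq c_0/\sqrt{t}$ for $t$ large, which follows from the reflection principle (or from the explicit Laplace transform $\Exp \re^{-s\zeta(1)} = (2\lambda + s - \sqrt{s(s+4\lambda)})/(2\lambda) = 1 - \sqrt{s/\lambda} + O(s)$ as $s \downarrow 0$, whose leading behaviour corresponds by a Tauberian argument to the tail $\Pr(\zeta(1) > t) \sim (\pi \lambda t)^{-1/2}$). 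This yields, for $k$ large enough,
\[
\Pr\bigl( \zeta(k) \leq c k^\alpha \bigr) \leq \bigl( 1 - c' k^{-\alpha/2} \bigr)^k \leq \exp\bigl( -c' k^{1 - \alpha/2} \bigr),
\]
and since $\alpha < 2$ gives $1 - \alpha/2 > 0$, any $\eps \in (0, 1 - \alpha/2)$ works.

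For part (c), I would apply the reflection principle directly to $Z$ started at $k$:
\[
\Pr(\zeta(k) > t) = \Pr(Z_t > 0) - \Pr(Z_t < 0),
\]
and the symmetry of $Z_t - k$ about $0$ lets us rewrite this as $\Pr(0 < Z_t < 2k) + \Pr(Z_t = 0)$. A local central limit estimate (or Chebyshev, using $\Var(Z_t) = 2 \lambda t$) then bounds this by $C k / \sqrt{t}$ for some $C = C(\lambda)$. Substituting $t = c k^\beta$ yields $\Pr(\zeta(k) \geq c k^\beta) \leq C' k^{1 - \beta/2}$; for $\beta > 2$, any $\eps \in (0, \beta/2 - 1)$ works.

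The main (mild) obstacle is simply pinning down the two-sided $1/\sqrt{t}$ tail behaviour of $\zeta(1)$ and of the survival probability $\Pr(\zeta(k) > t)$. These are standard for symmetric walks, and since the statement only asks for \emph{some} $\eps > 0$ we have considerable slack: bounds of the right polynomial order, not sharp constants, suffice throughout.
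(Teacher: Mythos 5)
Your proposal is correct. Parts (a) and (b) follow essentially the same route as the paper: the decomposition of $\zeta(k)$ into $k$ i.i.d.\ descent times, the obvious coupling for (a), and for (b) the bound $\Pr(\zeta(k)\leq r)\leq \Pr(\zeta(1)\leq r)^k$ combined with the $t^{-1/2}$ lower tail of $\zeta(1)$ (the paper extracts this tail from the explicit Bessel-function density of $\zeta(1)$; your Laplace-transform/Tauberian route gives the same asymptotic $\bar F(t)\sim(\pi\lambda t)^{-1/2}$). Part (c) is where you genuinely diverge: the paper takes $p\in(1/\beta,1/2)$, uses $\Exp(Y_j^p)\leq C$ and subadditivity of $y\mapsto y^p$ to get $\Exp(\zeta(k)^p)\leq Ck$, and applies Markov's inequality, yielding exponent $1-\beta p$ with $p$ strictly below $1/2$; your reflection-principle argument $\Pr(\zeta(k)>t)=\Pr(0<Z_t<2k)+\Pr(Z_t=0)\leq Ck/\sqrt{t}$ gives the sharper exponent $1-\beta/2$ directly. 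Both are valid and both comfortably suffice since only some $\eps>0$ is needed; yours is arguably more transparent about where the $k/\sqrt{t}$ comes from, while the paper's avoids any local limit theorem.

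One small caution on your part (c): the parenthetical suggestion that Chebyshev with $\Var(Z_t)=2\lambda t$ bounds $\Pr(0<Z_t<2k)$ does not work as stated --- Chebyshev controls the probability that $Z_t$ is \emph{far} from its mean, whereas here you need an upper bound on the probability of a \emph{central} window of width $2k$. For that you need an anti-concentration input, e.g.\ the local limit theorem or the standard bound $\sup_j\Pr(Z_t=j)\leq C/\sqrt{t}$ (which also follows from the explicit Bessel form of the transition probabilities). Since you name the local CLT as your primary tool, this is a slip in an alternative suggestion rather than a gap in the proof.
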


 Before proving this lemma, we make some observations.
Suppose $Z_0 = k \in \ZP$. Then we can represent $\zeta(k)$ as
\begin{equation}
\label{eq:zeta-rep}
 \zeta(k) = Y_1 + \cdots + Y_k ,\end{equation}
where 
\[ Y_j = \inf \{ t \geq 0 : Z_t = j- 1\}  - \inf \{ t \geq 0 : Z_t = j \} .\]
Note that, by the strong Markov property and the spatial homogeneity of the random walk,
 the $Y_j$ in~\eqref{eq:zeta-rep} are i.i.d.~copies of $\zeta (1)$.

It is well known that for $k \in \N$,  $\zeta (k)$ has density
\[ f_k (u) := \frac{k}{u} I_k (2 \lambda u) \re^{-2 \lambda u}, \text{ for } u >0 ;\]
see for example Sections II.7 and XIV.6 of \cite{feller2}.
Here $I_k$ is the modified Bessel function of the first kind:
\[ I_k (u) := \sum_{j=0}^\infty \frac{ (u/2)^{2j + k}}{j ! (j+k)! } .\]
In particular, the density of $\zeta(1)$ is
\begin{equation}
\label{f-density}
 f (u) := f_1 (u) = \frac{1}{u} I_1 ( 2 \lambda u) \re^{-2\lambda u} = \frac{1}{2\sqrt{\pi\lambda}} u^{-3/2} + O ( u^{-5/2} )  ,
\end{equation}
as $u \to \infty$, using the asymptotic expressions of \cite[p.~203]{watson}. Note that $f(u) >0$ for all $u \in (0,\infty)$.
Let $F (u) := \Pr ( \zeta (1) \leq u)$ and $\bar F (u) := \Pr ( \zeta (1) > u)$. Then, by~\eqref{f-density}, we have
\begin{equation}
\label{eq:G-tail}
 \bar F (u) = \int_u^\infty f (v) \ud v = \frac{1}{\sqrt{ \pi \lambda} } u^{-1/2} + O ( u^{-3/2} ) ,\end{equation}
as $u \to \infty$.

\begin{proof}[Proof of Lemma~\ref{lem:hittingtime}.]
For $0 \leq \ell \leq k$, the representation~\eqref{eq:zeta-rep}  gives $\zeta(k) \geq \zeta (\ell)$ in the obvious
coupling, so we get part~(a).
 
For part (b), we use~\eqref{eq:zeta-rep} and the fact that $Y_j \geq 0$ to write
\begin{align*}
\Pr ( \zeta(k) \leq r ) & \leq \Pr \left( \max_{1 \leq  j \leq k} Y_j \leq r \right) \\
& = \Pr \left( \bigcap_{j=1}^k \{ Y_j \leq r \} \right) \\
& =   \left( 1 - (\pi \lambda (1 + o(1)) r)^{-1/2} \right)^k ,
\end{align*}
as $r \to \infty$, by~\eqref{eq:G-tail}.
In particular, taking $r = c k^\alpha$ with $c, \alpha >0$ gives
\begin{align*} \Pr ( \zeta(k) \leq c k^\alpha ) & \leq \left( 1 - (\pi \lambda c (1 + o(1)) k^\alpha )^{-1/2} \right)^k , \end{align*}
as $k \to \infty$, so that
\[ \log \Pr ( \zeta(k) \leq c k^\alpha )  \leq k \log \left( 1 - (\pi \lambda c (1 + o(1)) k^\alpha )^{-1/2} \right)
\sim - ( \pi \lambda c )^{-1/2} k^{1-(\alpha/2)} ,\]
which gives part~(b).

For part~(c), let $p \in ( 1/\beta , 1/2)$. Note from~\eqref{eq:G-tail} that the $Y_j$ appearing in~\eqref{eq:zeta-rep} have $\Exp ( Y_j^p ) \leq C$
for some $C < \infty$.
Then by subadditivity of the function $y \mapsto y^{p}$ we have from~\eqref{eq:zeta-rep} that $\Exp ( \zeta(k)^p ) \leq \sum_{j=1}^k \Exp (Y_j^p ) \leq Ck$. Hence, by Markov's inequality,
\[ \Pr ( \zeta(k) \geq c k^\beta ) = \Pr ( \zeta(k)^p \geq c^p k^{\beta p} ) \leq Cc^{-p}k^{1-\beta p} ,\]
which, by choice of $p$, gives part~(c).
\end{proof}

Let $\Phi$ be the distribution function of the standard normal distribution,
and let $\bar \Phi (u) := 1 - \Phi (u)$ for $u \in \R$.
We say that $S$ has a \emph{L\'evy distribution} with location parameter $0$ and scale parameter $1$
(see~\cite[\S 1.1]{nolan}) if
$S \in \RP$ has distribution function given  by
\begin{equation} 
\label{levy-cdf}
F_S (u) := \Pr ( S \leq u ) = 2 \bar \Phi ( u^{-1/2} ) , \text{ for } u > 0.\end{equation}
Note that the density $f_S (u) := F_S' (u)$ corresponding to~\eqref{levy-cdf} is
\begin{equation} 
\label{levy-pdf}
f_S (u) = \frac{1}{\sqrt{2\pi}} u^{-3/2} \re^{-u^{-1}/2} , \text{ for } u >0.
\end{equation}

\begin{lemma}
\label{lem:levy}
Let $S$ be a random variable with the distribution given by~\eqref{levy-cdf}.
There exists a constant $C \in \RP$ such that, for all $k \in \N$, 
\[ \sup_{u \in \RP} \left| \Pr ( k^{-2} \zeta(k) \leq u ) - F_S (2 \lambda u)  \right| \leq Ck^{-1} . \]
\end{lemma}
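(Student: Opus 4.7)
The plan is to compare the densities of $k^{-2}\zeta(k)$ and of the target distribution directly via the explicit Bessel-function formula for $f_k$ stated just before~\eqref{f-density}, and then integrate to pass to CDFs. After the change of variables $u \mapsto k^2 u$, the density of $k^{-2}\zeta(k)$ at $u>0$ is
\[
g_k(u) := k^2 f_k(k^2 u) = \frac{k}{u}\, I_k(2\lambda k^2 u)\, \re^{-2\lambda k^2 u},
\]
while the density of $S/(2\lambda)$ (whose CDF is $u \mapsto F_S(2\lambda u)$) is, by~\eqref{levy-pdf},
\[
h(u) := 2\lambda f_S(2\lambda u) = \tfrac{1}{\sqrt{4\pi\lambda}}\, u^{-3/2}\, \re^{-1/(4\lambda u)}.
\]

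The central tool is the Debye uniform asymptotic expansion
\[
I_\nu(\nu z) = \frac{\re^{\nu\eta(z)}}{\sqrt{2\pi\nu}\,(1+z^2)^{1/4}}\left( 1 + O(\nu^{-1})\right),
\qquad
\eta(z) := \sqrt{1+z^2} + \log\frac{z}{1+\sqrt{1+z^2}},
\]
applied with $\nu = k$ and $z = 2\lambda k u$. For $u$ bounded below by any fixed $u_0 > 0$, expansion of $\eta$ as $z \to \infty$ yields $k\eta(2\lambda k u) = 2\lambda k^2 u - \frac{1}{4\lambda u} + O(k^{-2}u^{-3})$ together with $(1 + 4\lambda^2 k^2 u^2)^{1/4} = \sqrt{2\lambda k u}\,(1 + O((ku)^{-2}))$. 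Substituting into $g_k$, the factors $\re^{\pm 2\lambda k^2 u}$ cancel and the prefactor simplifies to match $h(u)$ exactly, leaving
\[
g_k(u) = h(u)\left( 1 + O(k^{-1})\right)
\]
uniformly on $[u_0, \infty)$. Integrating this estimate against $h$ (which is integrable on $[u_0, \infty)$) yields $| \Pr(k^{-2}\zeta(k) \leq u) - F_S(2\lambda u) | \leq Ck^{-1}$ uniformly for $u \geq u_0$.

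The region $u \leq u_0$ is handled by combining Lemma~\ref{lem:hittingtime}(b) (which gives $\Pr(k^{-2}\zeta(k) \leq u_0) \leq \re^{-k^\eps}$ for sufficiently small $u_0$) with the observation from~\eqref{levy-cdf} that $F_S(2\lambda u_0) = 2\bar\Phi((2\lambda u_0)^{-1/2})$ decays exponentially as $u_0 \downarrow 0$; choosing $u_0$ small but fixed (independently of $k$) makes both contributions $o(k^{-1})$. The main obstacle is performing the Debye expansion with sufficient care that the error bounds are uniform in $u$ throughout $[u_0, \infty)$; the subtle point is that the residual error in the exponent, $\re^{-1/(4\lambda u) + O(k^{-2}u^{-3})}$, must be absorbed into a relative error of size comparable to $1/(k^2 u)$, ensuring that the $u$-dependence is integrable against the rapidly decaying $h$. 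A careful bookkeeping of the expansion in fact delivers the stronger rate $O(k^{-2})$, so the stated bound $Ck^{-1}$ is generous.
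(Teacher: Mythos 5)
Your route --- writing down the exact Bessel density of $\zeta(k)$, rescaling, and applying the Debye uniform asymptotic expansion of $I_k$ --- is genuinely different from the paper's proof, which identifies the limit $S$ via the invariance principle plus the continuous mapping theorem and then obtains the rate $Ck^{-1}$ by treating $\zeta(k)=Y_1+\cdots+Y_k$ as an i.i.d.\ sum in the normal domain of attraction of the one-sided $1/2$-stable law and citing Berry--Esseen-type theorems for stable limits under pseudomoment conditions. Your computation in the bulk is correct: with $\nu=k$ and $z=2\lambda k u$ one gets $k\eta(z)=2\lambda k^2u-\tfrac{1}{4\lambda u}+O(k^{-2}u^{-3})$, the prefactors match $h(u)$, and (provided you compare the two laws by integrating $|g_k-h|$ over the \emph{upper} tail $[u,\infty)$, so that no boundary term at $u_0$ appears) this yields the uniform $O(k^{-1})$ bound on $[u_0,\infty)$.

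The treatment of $u\le u_0$ with a \emph{fixed} $u_0$ is, however, a genuine gap. For fixed $u_0>0$ the quantity $F_S(2\lambda u_0)=2\bar\Phi((2\lambda u_0)^{-1/2})$ is a positive constant independent of $k$; it is small when $u_0$ is small, but it is certainly not $o(k^{-1})$ as $k\to\infty$, so it cannot be absorbed into a $Ck^{-1}$ bound that must hold for all $k$ with one constant. Correspondingly, the claimed bound $\Pr(k^{-2}\zeta(k)\le u_0)\le\re^{-k^\eps}$ is false: by the very statement being proved (or by the weak convergence you establish in the bulk), this probability converges to $F_S(2\lambda u_0)>0$. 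Lemma~\ref{lem:hittingtime}(b) does not apply here, since the threshold $k^2u_0$ corresponds to $\alpha=2$, outside the range $\alpha\in(0,2)$ of that lemma. The argument is salvageable, but only with a $k$-dependent cutoff: take, say, $u_0(k)=c(\log k)^{-2}$ with $c$ small, so that $F_S(2\lambda u_0(k))\le 2\re^{-(\log k)^2/(4\lambda c)}=o(k^{-1})$, while the bound $\Pr(\zeta(k)\le r)\le\exp\{-k\bar F(r)\}$ from the proof of Lemma~\ref{lem:hittingtime}(b) gives $\Pr(\zeta(k)\le k^2u_0(k))\le\exp\{-(\pi\lambda c)^{-1/2}(1+o(1))\log k\}=O(k^{-1})$ for $c$ small enough; one must then also check that the Debye error terms $O(k^{-2}u^{-3})$ and $O((ku)^{-2})$ remain $o(1)$ down to $u=u_0(k)$, which they do since they are only polylogarithmically worse than at fixed $u$. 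Without some such $k$-dependent splitting, the proof as written does not establish the lemma. (Your closing claim of an $O(k^{-2})$ rate should be treated with the same caution, as it is again limited by the small-$u$ region rather than by the Debye expansion.)
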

\begin{proof}
For the purposes of this proof only, we take $Z_0 = 0$.
Let $D := D ( \RP, \R)$ denote the space of functions from $\RP \to \R$
that are right-continuous and have left limits, endowed with the Skorokhod metric.
Define for $m \in \N$,
\[ z_m (t) := m^{-1/2} Z_{mt} , \text{ for } t \geq 0.\]
Then $z_m \in D$ for each $m \in \N$.
Let $(b(t), t \in \RP)$ denote standard Brownian motion started at $b_0 = 0$.
The invariance principle for continuous-time random walks implies that as $m \to \infty$,
$z_m \Rightarrow \sqrt{2 \lambda} b$ in the sense of weak convergence
on $D$  (one may apply Theorem~7.1.4 of~\cite[pp.~339--340]{ek}, for example).

For $z \in D$, let $\sigma(z) := \inf \{ t \geq 0 : z(t) > 1 \}$.
For Brownian motion, we have
that
$\sigma (b) = \inf \{ t \geq 0 : b(t) =1 \}$ a.s., and, 
 for any $\eps >0$, $\sup_{0 \leq s \leq \sigma(b) - \eps} b(s) < 1$, a.s.
Thus the set of discontinuities of the mapping $z \mapsto \sigma(z)$ has measure zero
under the measure induced by  Brownian motion (see Section~5.7.5 of \cite{whitt}).
So by the  mapping theorem (see e.g.~Theorem~2.7 of~\cite{bill}) we get $\sigma ( (2 \lambda)^{-1/2} z_m ) \tod S := \sigma (b)$ (here $\tod$ denotes convergence in distribution).
Here
\begin{align*} \sigma ( (2 \lambda)^{-1/2} z_m ) 
& = \inf \{ t \geq 0 : Z_{m t} > \sqrt{ 2 \lambda m } \} \\
& = m^{-1} \inf \{ s \geq 0 : Z_s \geq 1 + \lfloor \sqrt{ 2 \lambda m } \rfloor  \}\\
& \eqd m^{-1} \zeta ( 1 + \lfloor \sqrt{ 2 \lambda m } \rfloor ) ,\end{align*}
where $\eqd$ denotes equality in distribution.
Setting $k = 1 + \lfloor \sqrt{ 2 \lambda m } \rfloor \in \N$ we 
have that
$2\lambda / k^2 = m^{-1} + O(m^{-3/2})$
so that, as $k \to \infty$,
\begin{equation}
\label{S-conv}
 \frac{2\lambda}{k^2} \zeta ( k ) \tod S .\end{equation}
The reflection principle for Brownian motion (see e.g.~\cite[p.~372]{durrett}) shows that
\[ \Pr (S > u ) = 1 - 2 \Pr ( b(u) \geq 1 ) = 2 \Phi ( u^{-1/2} ) -1 ,\]
so $S$ has the distribution given by~\eqref{levy-cdf}.

It remains to estimate the rate of convergence in~\eqref{S-conv}. By~\eqref{eq:G-tail} and Theorem~2.6.7 of~\cite{il}, we have that $\zeta(1)$
is in the normal domain of attraction of a positive stable law with index $1/2$. Indeed, $S$ is stable with index $1/2$ since, by the scaling and strong Markov properties
of Brownian motion, for any $m \in \N$,
\[ S \eqd \inf \{ t \geq 0 : m^{-1} b ( m^2 t ) = 1\} = m^{-2} \inf \{ t \geq 0 : b ( t) = m\} \eqd m^{-2} ( S_1 + \cdots + S_m ) ,\]
where the $S_j$ are independent copies of $S$.
Thus we can apply results on the rate of convergence in the stable central limit theorem for the sum in~\eqref{eq:zeta-rep}.
First note that, by Taylor's theorem,
\begin{align*} \Phi ( u^{-1/2} ) & = \Phi (0) + \Phi' (0) u^{-1/2} + \frac{\Phi'' (0)}{2} u^{-1} + O ( u^{-3/2} ) \\
& = \frac{1}{2} + \frac{1}{\sqrt{2\pi}} u^{-1/2} + O ( u^{-3/2} ),\end{align*}
as $u \to\infty$.
Thus if $\bar F_S (u) := 1 - F_S (u)$ we have from~\eqref{levy-cdf} that
\begin{equation}
\label{S-tail}
 \bar F_S ( 2 \lambda u ) = 2  \Phi ( (2\lambda u)^{-1/2} ) -1 =  \frac{1}{\sqrt{\pi\lambda}} u^{-1/2} + O (u^{-3/2}) ,\end{equation}
as $ u \to \infty$. Combining~\eqref{S-tail} with~\eqref{eq:G-tail} we have that
\[ | F ( u) - F_S ( 2\lambda u ) | = O ( u^{-3/2} ) .\]
This condition enables one to verify standard `pseudomoments' conditions
for Berry--Esseen bounds in stable limit theorems. Indeed, setting $H(u ) = F(u) - F_S (2 \lambda u)$ and
\[ \mu_\ell = \int_0^\infty u^\ell \ud H (u) , \text{ and } \nu_\ell = \int_0^\infty | u | ^\ell | \ud H (u) | ,\]
we have that $\nu_1 < \infty$ and $\mu_0 = 0$, so we may apply the results of~\cite{saty} 
(which has a statement but no proof), \cite{paul} (Corollary~1)
or~\cite{cw} (combine Theorem~3.11 of~\cite[p.~66]{cw} with Lemma~2.5 of~\cite[p.~27]{cw}).
This gives the result.
\end{proof}

\section{Time-scale estimates}
\label{sec:time}

In this section we study the asymptotics of $\tau_n$ and $T_n$. First we have a lower bound.

\begin{proposition}
\label{prop:lower-bound}
For any $\alpha \in (1,2)$, $T_n \geq \tau_n \geq \re^{\alpha^n}$ for all but finitely many $n$, a.s.
\end{proposition}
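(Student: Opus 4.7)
The plan is to combine a one-step lower bound of the form $\tau_{n+1} \gtrsim \tau_n^\beta$ (valid when $\tau_n$ is large), with auxiliary $\beta \in (\alpha,2)$, with the Borel--Cantelli lemma to propagate doubly-exponential growth, and then to verify a base case seeding this iteration. For the one-step bound, the construction in Section~\ref{sec:discrete} gives $\tau_{n+1} \geq \zeta(Q_n(X_{n+1}))$ by stochastic monotonicity (Lemma~\ref{lem:hittingtime}(a)). During step $n$ the server is absent from $X_{n+1}$, so (whether $X_{n+1}$ is $X_n+1$ or $X_n-1$, and whether or not $X_{n+1}$ was previously visited) at least a Poisson$(\lambda \tau_n)$ number of new arrivals at $X_{n+1}$ accumulates during $[T_{n-1},T_n]$, and this is independent of $\cF_{n-1}$ given $\tau_n$; thus $Q_n(X_{n+1})$ stochastically dominates a Poisson$(\lambda \tau_n)$ random variable. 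Chernoff concentration then gives $Q_n(X_{n+1}) \geq \lambda \tau_n/2$ off a set of conditional probability at most $\exp(-c \tau_n)$, and Lemma~\ref{lem:hittingtime}(b) with exponent $\beta$ gives $\zeta(k) \geq k^\beta$ off a set of probability at most $\exp(-k^\eps)$ for $k$ large. Combining, for some $C,\eps'>0$ and all $R$ sufficiently large, on $\{\tau_n \geq R\}$,
\[
\Pr\bigl(\tau_{n+1} < (\lambda \tau_n/2)^\beta \bigm| \cF_{n-1}, \tau_n \bigr) \leq C \exp(-\tau_n^{\eps'}).
\]

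Applying this with $\tau_n \geq \exp(\alpha^n)$: since $\beta > \alpha$, $(\lambda \exp(\alpha^n)/2)^\beta \geq \exp(\alpha^{n+1})$ for all $n$ large. Hence $\Pr\bigl(\tau_n \geq \exp(\alpha^n),\, \tau_{n+1} < \exp(\alpha^{n+1})\bigr) \leq C \exp(-\exp(\eps' \alpha^n))$, which is trivially summable. Borel--Cantelli then gives that almost surely, for all but finitely many $n$, the implication $\tau_n \geq \exp(\alpha^n) \Rightarrow \tau_{n+1} \geq \exp(\alpha^{n+1})$ holds; the doubly-exponential lower bound thus propagates forever once it is first attained at some $n$ past a (random) threshold.

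For the base case---ensuring $\tau_n \geq \exp(\alpha^n)$ holds at at least one $n$ past the threshold---I would argue as follows. Since $T_n \to \infty$ a.s.\ and $\limsup_n |X_n| = \infty$ a.s.\ (by~\eqref{eq:limsup}), the server visits previously-unseen sites at infinitely many $n$; at such a first-visit step $n+1$, $Q_n(X_{n+1})$ has distribution Poisson$(\lambda T_n)$, so by Lemma~\ref{lem:hittingtime}(b) $\tau_{n+1}$ exceeds any fixed power $T_n^\gamma$, $\gamma \in (1,2)$, with probability tending to $1$ as $T_n \to \infty$. Hence $\sup_n \tau_n = \infty$ almost surely; for any large $M_0$, the first $n_0$ with $\tau_{n_0} \geq M_0$ is almost surely finite, and iterating the one-step estimate from $n_0$ gives $\tau_{n_0+k} \geq M_0^{\beta^k}$ with high probability, which eventually exceeds $\exp(\alpha^{n_0+k})$ since $\beta > \alpha$. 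The hardest part is quantifying this cascade: one must control the nested small error probabilities when bootstrapping from an unbounded but potentially slowly-growing sequence. The super-exponential decay of the one-step error, however, ensures this can be managed by a further Borel--Cantelli argument, say with a geometric schedule $M_0 = 2^j$ for $j \to \infty$.
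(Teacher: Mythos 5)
Your strategy---a one-step bound $\tau_{n+1}\gtrsim\tau_n^{\beta}$ with failure probability super-polynomially small in $\tau_n$, a Borel--Cantelli argument to propagate $\tau_n\geq \re^{\alpha^n}$ once attained, and a separate seeding argument---is sound and its first two ingredients coincide with the paper's (your one-step bound is essentially Lemma~\ref{lem:tau-growth}, and your propagation is an instance of Lemma~\ref{lem:io-to-all-but-fo}). Where you genuinely diverge is the seed, and that is also the least developed part of your write-up. The paper proves from scratch (Lemma~\ref{lem:sqrt-tail}) that $\tau_n\geq n^2$ infinitely often, using the uniform conditional tail bound $\Pr(\tau_{n+2}>r\mid\cF_n)\geq cr^{-1/2}$ from~\eqref{eq:G-tail} together with L\'evy's extension of Borel--Cantelli; it upgrades this to ``all but finitely many $n$'' (Corollary~\ref{cor:tau-nsq}), deduces the \emph{unconditional} statement $\tau_{n+1}\geq\tau_n^{\alpha}$ eventually, and then finishes with a purely deterministic iteration from a random $N$ with $\tau_N\geq\re$, absorbing the random shift by trading $\alpha$ for a smaller $\tilde\alpha$. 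Your seed instead invokes the external result~\eqref{eq:limsup} to produce first-visit times, then runs a probabilistic cascade from the random time $n_0(M_0)$ with a union bound along the cascade and a geometric schedule $M_0=2^j$. This can be made to work---the point you correctly identify is that the cascade must survive $O(n_0)$ steps before $M_0^{\beta^k}$ overtakes $\re^{\alpha^{n_0+k}}$, and the total failure probability is dominated by its first term $C\re^{-cM_0^{\eps'}}$ precisely because the one-step error is small in $\tau_n$ rather than in $n$---but it is heavier than necessary and imports a dependence on Kurkova--Menshikov's theorem that the paper's proof of this proposition avoids. You do not need first visits at all: the site $X_{n-1}$ is empty at time $T_{n-1}$ and receives $P(\lambda\tau_n)$ arrivals by time $T_n$, so $\Pr(\tau_{n+1}>r\mid\cF_{n-1})\geq cr^{-1/2}$ uniformly, which already gives $\tau_n\geq n^2$ infinitely often by conditional Borel--Cantelli. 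Two small points of care: the domination of $Q_n(X_{n+1})$ by $P(\lambda\tau_n)$ should be routed through $Q_n(X_{n+1})=\max\{Q_n(X_n-1),Q_n(X_n+1)\}\geq Q_n(X_{n-1})$, since $X_{n+1}$ is \emph{selected} according to the very arrivals you are conditioning on (your ``independent of $\cF_{n-1}$ given $\tau_n$'' is not literally correct for the selected site, though the upward bias means the domination survives); and the multiplicative constants $(\lambda/2)^{\beta}$ accumulating along the cascade must be absorbed into the base, which requires $M_0$ large but is handled by your schedule.
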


We also have the following upper bound.

\begin{proposition}
\label{prop:upper-bound}
For any $\beta \in (2,\infty)$, $\tau_n \leq T_n \leq \re^{\beta^n}$ for all but finitely many $n$, a.s.
\end{proposition}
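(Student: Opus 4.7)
The plan is to iterate a single-step estimate of the form $\tau_{n+1} \leq c_1 T_n^{\beta'}$, for an intermediate exponent $\beta' \in (2,\beta)$, to obtain the double-exponential upper bound, using Proposition~\ref{prop:lower-bound} to ensure summability in a conditional Borel--Cantelli argument.

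First I would establish the single-step estimate. Conditional on $\cF_n$, the variable $\tau_{n+1}$ is distributed as $1 + \zeta ( Q_n(X_{n+1}) + P(\lambda))$. The key observation is that $Q_n(X_{n+1})$ counts arrivals-minus-departures at site $X_{n+1}$ over $[0,T_n]$, so it is stochastically dominated (given $T_n$) by the total Poisson arrival count $P(\lambda T_n)$. Combining a Chernoff bound for Poisson random variables with Lemma~\ref{lem:hittingtime}(c), applied with parameter $\beta'$, should give, for $T_n$ sufficiently large and some positive constants $c, c_1, c_2, \eps$,
\[ \Pr \bigl( \tau_{n+1} > c_1 T_n^{\beta'} \bigmid \cF_n \bigr) \leq c_2 T_n^{-\eps} + \exp(-c T_n) . \]

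Next, by Proposition~\ref{prop:lower-bound}, almost surely $T_n \geq \re^{\alpha^n}$ for all but finitely many $n$ (fixing any $\alpha \in (1,2)$), so the right-hand side above is a.s.\ summable in $n$. The conditional Borel--Cantelli lemma then yields $\tau_{n+1} \leq c_1 T_n^{\beta'}$ for all but finitely many $n$, a.s. Iterating the resulting recurrence $T_n \leq T_{n-1} + c_1 T_{n-1}^{\beta'} \leq 2 c_1 T_{n-1}^{\beta'}$ (valid for $T_{n-1}$ large), and taking logarithms, gives $\log T_n \leq (\beta')^{n-N} ( \log T_N + O(1))$ from some random starting index $N$. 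Hence $T_n \leq \exp( D (\beta')^n)$ for a random $D < \infty$, and since $\beta' < \beta$ this gives $T_n \leq \re^{\beta^n}$ for all but finitely many $n$; the bound on $\tau_n$ then follows from $\tau_n \leq T_n$.

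The main obstacles are twofold. First, the stochastic domination $Q_n(X_{n+1}) \leq_{\mathrm{st}} P(\lambda T_n)$ must be verified from the recursive construction, with care taken to respect $\cF_n$-measurability when conditioning; this essentially amounts to noting that the queue length at any site is bounded by its independent Poisson arrival stream. Second, the reliance on Proposition~\ref{prop:lower-bound} appears essential: Lemma~\ref{lem:hittingtime}(c) supplies only polynomial tail decay in $T_n$, and the trivial lower bound $T_n \geq n$ would not yield summability for small $\beta$; the doubly exponential lower bound from Proposition~\ref{prop:lower-bound} is precisely what turns the polynomial tail into an a.s.\ summable series, which in turn makes the iteration work for every $\beta > 2$.
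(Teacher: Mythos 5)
Your proposal is correct and follows essentially the same route as the paper: a single-step bound $\tau_{n+1}\leq T_n^{\beta'}$ obtained from the stochastic domination of $Q_n(X_{n+1})$ by Poisson arrivals (the paper's Lemma~\ref{lem:Q-big}, which handles the conditioning caused by intermediate inspections via Lemma~\ref{lem:reduced} and the maximum over the two neighbours) together with Lemma~\ref{lem:hittingtime}(c), made summable by the doubly exponential lower bound of Proposition~\ref{prop:lower-bound}, then conditional Borel--Cantelli and iteration with an intermediate exponent. Your two identified "obstacles" are exactly the points the paper addresses, so nothing essential is missing.
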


\begin{remark*}
A rough calculation (cf.~Lemma~\ref{lem:stable} below) suggests that in fact we may have
\[ \lim_{n \to \infty} \frac{ \log \tau_n}{2^n} = \gamma , \as, \]
and the same for $T_n$. Here $\gamma \in (0,\infty)$ is a random variable with representation $\gamma = \sum_{i=1}^{\infty} 2^{-i} \log (\lambda S_{i}/2)$,
where $S_1, S_2, \ldots$ are independent random variables with distribution given by~\eqref{levy-cdf}.
To establish Theorems~\ref{thm:recurrent} and~\ref{thm:growth} however, the bounds in Propositions~\ref{prop:lower-bound}
and~\ref{prop:upper-bound} are sufficient (in fact, for Theorem~\ref{thm:recurrent}, we need only the lower bound).
\end{remark*}

We work towards the proof of Proposition~\ref{prop:lower-bound}.
We start with a crude bound. Here and elsewhere, `i.o.' stands for `infinitlely often'.

\begin{lemma}
\label{lem:sqrt-tail}
We have $\tau_n \geq   n^2$ i.o., a.s.
\end{lemma}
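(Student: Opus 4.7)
The plan is to apply the conditional (L\'evy) extension of the second Borel--Cantelli lemma to the events $A_n := \{\tau_n \geq n^2\}$; hence it suffices to prove that $\sum_n \Pr(A_n \mid \cF_{n-1}) = \infty$ almost surely.

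Since $X_n$ is $\cF_{n-1}$-measurable, so is $K := Q_{n-1}(X_n)$, and the construction in Section~\ref{sec:discrete} presents $\tau_n$, conditional on $\cF_{n-1}$, as distributed like $1 + \zeta(K + P(\lambda))$, with $P(\lambda)$ and the hitting-time random variable $\zeta(\cdot)$ independent of $\cF_{n-1}$. Using $K \geq 0$ together with the stochastic monotonicity of $\zeta(\cdot)$ from Lemma~\ref{lem:hittingtime}(a), and then conditioning on $\{P(\lambda) \geq 1\}$ (which has probability $1 - \re^{-\lambda}$) and applying Lemma~\ref{lem:hittingtime}(a) once more, I would obtain
\[ \Pr(A_n \mid \cF_{n-1}) \geq \Pr(\zeta(P(\lambda)) \geq n^2 - 1) \geq (1 - \re^{-\lambda}) \bar F(n^2 - 1) . \]
The tail estimate~\eqref{eq:G-tail} gives $\bar F(n^2 - 1) \sim (\pi \lambda)^{-1/2} n^{-1}$, so $\Pr(A_n \mid \cF_{n-1}) \geq c/n$ for some deterministic $c > 0$ and all $n$ large enough. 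Since $\sum_n n^{-1} = \infty$, the divergence condition holds almost surely and the conditional Borel--Cantelli lemma delivers the claim.

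There is really no serious obstacle here: the argument uses only the heavy $u^{-1/2}$ right tail of the critical busy period $\zeta(1)$, together with the fact that the $P(\lambda)$ arrivals during a single unit of travel time alone create a non-empty queue with positive probability. The one point that needs care is tracking the $\cF_{n-1}$-measurability of $K$ so that the conditional distribution of $\tau_n$ can be read off directly from the recipe in Section~\ref{sec:discrete}; no information about the detailed history of the queue at $X_n$ is needed for this lower bound.
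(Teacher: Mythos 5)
Your argument is correct, and it rests on the same two facts as the paper's proof: the $u^{-1/2}$ tail of $\zeta(1)$ from~\eqref{eq:G-tail}, giving a conditional lower bound of order $1/n$ at the threshold $n^2$, followed by L\'evy's extension of the Borel--Cantelli lemma. The difference is in how you make the queue about to be served non-empty with uniformly positive probability. The paper conditions two steps back: given $\cF_n$, the queue at $X_{n+2}$ accumulates $P(\lambda \tau_{n+1})\succeq P(\lambda)$ arrivals during the service period $[T_n,T_{n+1}]$, so $\Pr(\tau_{n+2}>r\mid\cF_n)\geq c r^{-1/2}$; the two-step gap then forces the authors to pass to the even-indexed subsequence $B_n=A_{2n}$ with filtration $\cG_n=\cF_{2n}$ before invoking the conditional Borel--Cantelli lemma. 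You instead exploit the $P(\lambda)$ travel-time arrivals that are built directly into the transition rule $\tau_n \eqd 1+\zeta(Q_{n-1}(X_n)+P(\lambda))$: this is fresh randomness independent of $\cF_{n-1}$, so you get $\Pr(A_n\mid\cF_{n-1})\geq (1-\re^{-\lambda})\bar F(n^2-1)\geq c/n$ in one step and can apply the lemma to the $A_n$ directly, with no subsequence trick. Your measurability bookkeeping is sound ($X_n$ and hence $K=Q_{n-1}(X_n)$ are $\cF_{n-1}$-measurable, and $Q_{n-1}(X_n)\neq *$ since the neighbours of $X_{n-1}$ are always inspected by step $n-1$), so this is a modest but genuine streamlining of the paper's argument; what the paper's route buys in exchange is nothing essential here, though its stochastic-domination statement for $Q_{n+1}(X_{n+2})$ is the template reused in later lemmas.
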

\begin{proof}
 Given $\cF_{n+1}$, we have from the description in Section~\ref{sec:discrete} and  Lemma~\ref{lem:hittingtime}(a) that $\tau_{n+2}$ stochastically dominates
$\zeta ( Q_{n+1} )$, where $\zeta( Q_{n+1} )$ depends on $\cF_{n+1}$ only through $Q_{n+1} := Q_{n+1} (X_{n+2})$.
Moreover, 
since $X_{n+2} \neq X_{n+1}$, we have that the queue at $X_{n+2}$ is not being 
served
between times $T_n$ and $T_{n+1}$, and in that time accumulates
a Poisson number of arrivals with mean $\lambda \tau_{n+1} \geq \lambda$, since $\tau_{n+1} \geq 1$ a.s.
Hence, given $\cF_n$, 
$Q_{n+1}$ stochastically dominates  
a Poisson random variable with mean $\lambda$. Thus we get
\begin{align*} \Pr ( \tau_{n+2} > r \mid \cF_n) 
& \geq \Exp ( \1 { Q_{n+1} \geq 1 } \Pr ( \tau_{n+2} > r \mid \cF_{n+1} ) \mid \cF_n) \\
& \geq \Exp ( \1 { Q_{n+1} \geq 1 } \Pr ( \zeta ( Q_{n+1} ) > r \mid \cF_{n+1} ) \mid \cF_n) \\
& \geq \Pr ( Q_{n+1} \geq 1 \mid \cF_n) \Pr ( \zeta(1) > r ) \\
& \geq \Pr ( P (\lambda ) \geq 1 ) \Pr ( \zeta (1 ) > r ) \\
& \geq (1 - \re^{-\lambda} ) ( \pi \lambda ( 1 + o(1) ) r )^{-1/2} ,
\end{align*}
by~\eqref{eq:G-tail}, as $r \to \infty$, uniformly in $n$. It follows that
there exist $c > 0$ and $r_0 \geq 1$ such that for any $n \in \ZP$ and any $r \geq r_0$,
\begin{equation}
\label{eq:sqrt-tail}
 \Pr ( \tau_{n+2} > r \mid \cF_{n}) \geq c r^{-1/2} , \as \end{equation}

 Let $A_n = \{ \tau_n > n^2 \}$, $B_n = A_{2n}$,
and $\cG_n = \cF_{2n}$.
Now 
taking $n_0 \in \N$ large enough so that $(n+2)^2 \geq r_0$ for all $n \geq n_0$, we have from~\eqref{eq:sqrt-tail} that
$\Pr ( A_{n+2} \mid \cF_n ) \geq \frac{c}{n+2}$, a.s.,  for all $n \geq n_0$.
Hence $B_n \in \cG_n$ and 
\[ \sum_{n \geq n_0} \Pr ( B_{n+1} \mid \cG_{n}) =  \sum_{n \geq n_0} \Pr (A_{2n+2} \mid \cF_{2n})
\geq \sum_{n \geq n_0}  \frac{c}{2n+2} = \infty, \as \]
Thus L\'evy's extension of the Borel--Cantelli lemma  (see e.g.~\cite[Corollary~7.20]{kall}) implies that
$B_n$ occurs infinitely often, and hence $A_n$ occurs infinitely often.
\end{proof}
 
The next result gives conditions under which an a.s.~lower bound for $\tau_n$ that holds
infinitely often can be converted into a bound that holds all but finitely often.

\begin{lemma}
\label{lem:io-to-all-but-fo}
Let $b_1, b_2, \ldots \in (0,\infty)$  and $\alpha \in (1,2)$ be such that
\begin{itemize}
\item[(i)] $\lim_{n \to \infty} b_n = \infty$; 
\item[(ii)] $\lim_{n\to\infty} (  b_{n+1} / b_n^{\alpha}) = 0$; 
\item[(iii)] $\sum_{n=1}^\infty \re^{-b_n^{\eps}} < \infty$ for any $\eps >0$.
\end{itemize}
Suppose that $\Pr ( \tau_n > b_n \io) = 1$. Then $\tau_n > b_n$ for all but finitely many $n$, a.s.
\end{lemma}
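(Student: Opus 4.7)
The plan is to establish a bootstrap via Borel--Cantelli: I will show that the ``bad transition'' event $B_n := \{ \tau_n > b_n , \, \tau_{n+1} \leq b_{n+1} \}$ has summable probability in $n$, so that $B_n$ occurs only finitely often a.s. Combined with the hypothesis that $\tau_n > b_n$ occurs infinitely often a.s., this produces a (random) index $n^*$ past which $B_n$ never occurs and for which $\tau_{n^*} > b_{n^*}$ holds; induction on $n \geq n^*$ then delivers $\tau_n > b_n$ for every subsequent $n$, proving the lemma.

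The bound on $\Pr(B_n)$ will come in two steps. First, by construction $Q_n(X_{n+1})$ stochastically dominates the $P(\lambda \tau_n)$ number of fresh arrivals accumulated at site $X_{n+1}$ during the $n$-th service (with an additional contribution if $X_{n+1}$ is a first-visit site), so a Chernoff bound gives, for some absolute $c>0$ and all $n$ large,
\[ \Pr \left( \{\tau_n > b_n\} \cap \{ Q_n(X_{n+1}) \leq \lambda b_n / 2 \} \right) \leq \re^{- c \lambda b_n} . \]
Second, fix any $\alpha_* \in (\alpha, 2)$ (non-empty since $\alpha < 2$). On the event $H_n := \{Q_n(X_{n+1}) > \lambda b_n / 2\}$, using $\tau_{n+1} = 1 + \zeta(Q_n(X_{n+1}) + P(\lambda))$ together with parts (a) and (b) of Lemma~\ref{lem:hittingtime} (the latter with exponent $\alpha_*$ and constant $1$), I obtain $\eps_1 > 0$ such that, for all $n$ large,
\[ \Pr \left( \tau_{n+1} \leq ( \lambda b_n / 2 )^{\alpha_*} \mid \cF_n \right) \1{H_n} \leq \exp \{ - (\lambda b_n / 2)^{\eps_1} \} \as \]
Condition (ii) gives $b_{n+1} < b_n^\alpha$ eventually, and since $\alpha_* > \alpha$ and $b_n \to \infty$ by (i), also $(\lambda b_n / 2)^{\alpha_*} > b_{n+1}$ eventually, so the same bound controls $\Pr(\tau_{n+1} \leq b_{n+1} \mid \cF_n) \1{H_n}$. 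Summing,
\[ \Pr(B_n) \leq \re^{-c \lambda b_n} + \exp\{-(\lambda b_n/2)^{\eps_1}\} , \]
which is summable by hypothesis (iii) (applied with any $\eps$ smaller than $\min(1,\eps_1)$), and Borel--Cantelli completes the probability estimate.

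The main obstacle, though mild, is the interplay between the greedy selection of $X_{n+1}$ and the very Poisson arrivals that one wants to exploit: a priori, conditioning on ``the larger neighbour was chosen'' could skew the Chernoff estimate. The resolution is that $Q_n(X_{n+1})$ is the maximum of the two neighbouring queues, hence dominates the Poisson increment on \emph{either} side individually, and no disentangling of the selection rule is needed. Once this observation is in place, Lemma~\ref{lem:hittingtime} carries all the quantitative tail control required, and hypotheses (i)--(iii) are calibrated exactly so that the gap between $b_{n+1}$ and $(\lambda b_n/2)^{\alpha_*}$ left by (ii) is wide enough to absorb the sub-polynomial exponents in the Lemma~\ref{lem:hittingtime}(b) tail while keeping every error probability summable.
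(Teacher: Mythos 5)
Your proposal is correct and follows essentially the same route as the paper: split the bad transition event according to whether $Q_n(X_{n+1})$ falls below $\lambda b_n/2$, control the first case by Poisson concentration and the second by Lemma~\ref{lem:hittingtime}(a),(b) together with hypothesis (ii), sum via (iii) and Borel--Cantelli, and bootstrap from the i.o.\ hypothesis. The only cosmetic difference is your use of an auxiliary exponent $\alpha_* \in (\alpha,2)$ where the paper applies Lemma~\ref{lem:hittingtime}(b) with $\alpha$ itself; both work.
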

\begin{proof}
We have from Lemma~\ref{lem:hittingtime}(a) that, given $\cF_n$,
 $\tau_{n+1}$ stochastically dominates
$\zeta ( Q_{n} )$, where $Q_{n} := Q_{n} (X_{n+1})$. Thus
\begin{align*}
\Pr ( \tau_{n+1} \leq b_{n+1} , \tau_n > b_n ) & \leq \Pr ( \zeta (Q_n) \leq b_{n+1}, \tau_n > b_n ) \\
& \!\!\!\!\!\!\!\!\!\!\!\!{} \leq \Pr ( \zeta (Q_n) \leq b_{n+1}, Q_n \geq \lambda \tau_n /2 , \tau_n > b_n ) 
+\Pr ( Q_n \leq \lambda \tau_n /2 , \tau_n > b_n ) .\end{align*}
Moreover, given $\tau_n$, $Q_{n}$ stochastically dominates $P ( \lambda \tau_n)$, so that
\begin{align*}
\Pr ( \tau_{n+1} \leq b_{n+1} , \tau_n > b_n )  
& \leq \Pr ( \zeta (Q_n) \leq b_{n+1}, Q_n \geq \lambda b_n /2 ) + \Pr ( P ( \lambda \tau_n ) \leq \lambda \tau_n /2 , \tau_n > b_n ) .
\end{align*}
By another application of Lemma~\ref{lem:hittingtime}(a), we have 
\begin{align*} \Pr ( \zeta (Q_n) \leq b_{n+1}, Q_n \geq \lambda b_n /2 ) 
& \leq \Pr ( \zeta ( \lfloor \lambda b_n /2 \rfloor ) \leq b_{n+1} ) \\
& \leq \Pr ( \zeta ( \lfloor \lambda b_n / 2 \rfloor ) \leq  \lfloor \lambda b_n / 2 \rfloor^\alpha ) ,\end{align*}
for all $n$ sufficiently large, since (i) and~(ii) imply that $b_{n+1} < \lfloor \lambda b_n /2 \rfloor^\alpha$ for $n$ large enough.
Hence, by Lemma~\ref{lem:hittingtime}(b) and the fact that $b_n \to \infty$, we have that for some $\eps>0$,
\[ \Pr ( \zeta (Q_n) \leq b_{n+1}, Q_n \geq \lambda b_n /2 ) \leq \re^{-b_n^\eps} ,\]
for all $n$ sufficiently large. On the other hand,
\[ \Pr ( P ( \lambda \tau_n ) \leq \lambda \tau_n /2 , \tau_n > b_n ) \leq \sup_{s \geq b_n} \Pr ( P ( \lambda s) \leq \lambda s /2 ) \leq \re^{-\delta b_n} ,\]
for some $\delta >0$, by standard Poisson tail bounds (see e.g.~\cite[p.~17]{penrose}).
Combining these estimates and using the fact that $b_n \to \infty$, we get, for some $\eps>0$,
\[ \Pr ( \tau_{n+1} \leq b_{n+1} , \tau_n > b_n )  \leq \re^{-b_n^\eps} ,\]
for all $n$ sufficiently large.
Then by (iii) and the Borel--Cantelli lemma, we have that $\{ \tau_{n+1} \leq b_{n+1} , \tau_n > b_n \}$
occurs only finitely often, a.s. In other words, for all $n$ sufficiently large we have $\tau_n > b_n$ implies $\tau_{n+1} > b_{n+1}$,
and since $\tau_n > b_n$ i.o., the result follows.
\end{proof}

We can now deduce the following lower bound, which, despite being far from best possible,
is an important step in proving Proposition~\ref{prop:lower-bound}.

\begin{corollary}
\label{cor:tau-nsq}
Almost surely, for all but finitely many $n$, $\tau_n \geq n^2$.
\end{corollary}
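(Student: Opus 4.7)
The strategy is to feed Lemma~\ref{lem:sqrt-tail}---which produces the $n^2$ bound infinitely often---directly into Lemma~\ref{lem:io-to-all-but-fo}, which is precisely the machine for converting an i.o.\ lower bound into an all-but-finitely-often lower bound. I take $b_n := n^2$ and $\alpha := 3/2 \in (1,2)$. Hypotheses (i)--(iii) of Lemma~\ref{lem:io-to-all-but-fo} are then immediate: $n^2 \to \infty$; $b_{n+1}/b_n^{\alpha} = (n+1)^2/n^3 = O(1/n) \to 0$; and $\sum_{n} \re^{-n^{2\eps}} < \infty$ for every $\eps > 0$.

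The only technicality is that Lemma~\ref{lem:io-to-all-but-fo} requires $\tau_n > b_n$ i.o.\ (strict), whereas Lemma~\ref{lem:sqrt-tail} supplies only $\tau_n \geq n^2$ i.o. To bridge the gap I argue that for each fixed $n \geq 2$ the law of $\tau_n$ has no atom at $n^2$. From the construction in Section~\ref{sec:discrete}, $\tau_n = 1 + \zeta(K_n)$ where $K_n := Q_{n-1}(X_n) + P(\lambda)$ is a (random) element of $\ZP$: on $\{K_n = 0\}$ we have $\tau_n = 1 \neq n^2$, while on $\{K_n \geq 1\}$, conditionally on $K_n$, the variable $\zeta(K_n)$ has the Bessel density $f_{K_n}$ on $(0,\infty)$, so $\Pr(\zeta(K_n) = n^2 - 1 \mid K_n \geq 1) = 0$. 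Hence $\Pr(\tau_n = n^2) = 0$ for every $n \geq 2$, and a union bound over the countably many $n$ gives $\Pr(\tau_n \neq n^2 \text{ for all } n \geq 2) = 1$. Combined with Lemma~\ref{lem:sqrt-tail} this yields $\tau_n > n^2$ i.o., a.s.

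Lemma~\ref{lem:io-to-all-but-fo} now applies and delivers $\tau_n > n^2$, hence $\tau_n \geq n^2$, for all but finitely many $n$, a.s. I do not anticipate any real obstacle: both input lemmas are already established, and the proof is essentially a one-line assembly modulo the strict-versus-non-strict inequality observation above. (Any choice of $\alpha \in (1,2)$ works equally well, and replacing $b_n$ by $n^2 - \tfrac12$ or similar would avoid even the atom argument, at the cost of slightly uglier constants.)
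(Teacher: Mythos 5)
Your proof is correct and is essentially identical to the paper's, which simply plugs $b_n = n^2$ from Lemma~\ref{lem:sqrt-tail} into Lemma~\ref{lem:io-to-all-but-fo}. Your care over strict versus non-strict inequality is sound but not really needed: the proof of Lemma~\ref{lem:sqrt-tail} actually works with the events $A_n = \{\tau_n > n^2\}$, so the strict form of the i.o.\ statement is already established there.
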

\begin{proof}
Taking $b_n = n^2$ in Lemma~\ref{lem:io-to-all-but-fo}, and applying Lemma~\ref{lem:sqrt-tail},
we obtain the result.
\end{proof}

The next result, showing that queues are rarely much shorter than we would expect,
will be used a couple of times.

\begin{lemma}
\label{lem:Q-small}
Almost surely, for all but finitely many $n$, $Q_n (X_{n+1} ) > \lambda \tau_n - \tau_n^{3/4}$.
\end{lemma}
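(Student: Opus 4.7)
The plan is to combine a stochastic domination argument with Poisson concentration, using the lower bound $\tau_n \geq n^2$ supplied by Corollary~\ref{cor:tau-nsq} to guarantee summability. The key observation I would first establish is that, conditional on $\cF_{n-1}$ and $\tau_n$, the variable $Q_n(X_{n+1})$ stochastically dominates a Poisson random variable with parameter $\lambda \tau_n$. Reading off the transitions of Section~\ref{sec:discrete} applied to the step $(Q_{n-1}, X_{n-1}, T_{n-1}) \to (Q_n, X_n, T_n)$: since $X_{n+1} \in \{X_n - 1, X_n + 1\}$ is chosen as the neighbour with the larger queue, one has $Q_n(X_{n+1}) = \max \bigl( Q_n(X_n - 1), Q_n(X_n + 1) \bigr)$, and each of $Q_n(X_n \pm 1)$ equals either $Q_{n-1}(X_n \pm 1) + P(\lambda \tau_n)$ (if that neighbour was previously visited) or $P(\lambda T_n)$ (if it is newly inspected at step $n$). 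In either case, using $T_n \geq \tau_n$ and $Q_{n-1}(\cdot) \geq 0$, that neighbour's queue dominates an independent $P(\lambda \tau_n)$ given $\cF_{n-1}$ and $\tau_n$, and hence so does their maximum.

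With domination in hand, I would apply the standard Chernoff bound for the lower tail of a Poisson (see e.g.~\cite[p.~17]{penrose}),
\[ \Pr \bigl( P(m) \leq m - t \bigr) \leq \exp \bigl( -t^2 / (2m) \bigr) \quad \text{for } 0 \leq t \leq m ,\]
with $m = \lambda \tau_n$ and $t = \tau_n^{3/4}$, to obtain
\[ \Pr \bigl( Q_n(X_{n+1}) \leq \lambda \tau_n - \tau_n^{3/4} \bigm| \cF_{n-1}, \tau_n \bigr) \leq \exp \bigl( -\tau_n^{1/2} / (2\lambda) \bigr) ,\]
valid whenever $\tau_n^{3/4} \leq \lambda \tau_n$, i.e., $\tau_n \geq \lambda^{-4}$, which holds for all $n$ large, a.s.

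To finish, let $A_n := \{ Q_n(X_{n+1}) \leq \lambda \tau_n - \tau_n^{3/4} \}$ and $B_n := \{ \tau_n \geq n^2 \}$. Because $B_n$ is $\sigma(\tau_n)$-measurable, the tower property gives $\Pr(A_n \cap B_n) \leq \Exp \bigl[ \1{B_n} \exp( -\tau_n^{1/2} / (2\lambda) ) \bigr] \leq \exp ( -n/(2\lambda) )$ for all sufficiently large $n$, which is summable. The Borel--Cantelli lemma then implies that $A_n \cap B_n$ occurs only finitely often, a.s., and Corollary~\ref{cor:tau-nsq} guarantees that $B_n^c$ does too, so $A_n$ occurs only finitely often, a.s., which is the claim. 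The only step requiring real care is the stochastic domination, which just needs an attentive reading of the update rule through the two cases (previously visited versus newly inspected neighbour); the concentration estimate and Borel--Cantelli step are routine given the generous slack provided by Corollary~\ref{cor:tau-nsq}.
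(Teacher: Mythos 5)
Your proof is correct and follows essentially the same route as the paper's: stochastic domination of $Q_n(X_{n+1})$ given $\cF_{n-1}$ and $\tau_n$ by a Poisson variable of mean $\lambda\tau_n$, a Poisson lower-tail bound, and Corollary~\ref{cor:tau-nsq} to turn the $\tau_n$-dependent estimate into a summable bound in $n$. The only cosmetic difference is that you apply the unconditional Borel--Cantelli lemma to the events $A_n\cap B_n$, whereas the paper works with conditional probabilities and L\'evy's extension of the Borel--Cantelli lemma; both are valid.
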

\begin{proof}
Let $Q_n := Q_n ( X_{n+1} )$. Let $n \in \N$. Then, given $\cF_{n-1}$,
 $Q_{n}$ stochastically dominates a Poisson random variable with mean $\lambda \tau_n$.
Thus
\begin{align*}
\Pr ( Q_n \leq \lambda \tau_n - \tau_n^{3/4} \mid \cF_{n-1} ) & \leq \Pr ( P (\lambda \tau_n) \leq \lambda \tau_n - \tau_n^{3/4} , \tau_n \geq n \mid \cF_{n-1} )
+ \Pr ( \tau_n \leq n \mid \cF_{n-1} ) \\
& \leq \sup_{s \geq n} \Pr ( P (\lambda s) \leq \lambda s - s^{3/4} ) + \Pr ( \tau_n \leq n \mid \cF_{n-1} ) \\
& \leq \re^{-n^\eps} + \Pr ( \tau_n \leq n \mid \cF_{n-1} ) ,
\end{align*}
for some $\eps >0$ and all $n$ sufficiently large, by Poisson concentration (see e.g.~\cite[p.~17]{penrose}).
In particular, since by Corollary~\ref{cor:tau-nsq}, $\tau_n \leq n$ only finitely often, a.s., L\'evy's extension of the Borel--Cantelli lemma (see e.g.~\cite[Corollary~7.20]{kall}) implies that 
$\sum_{n \geq 1} \Pr ( \tau_n \leq n \mid \cF_{n-1} ) < \infty$, and hence 
\[ \sum_{n \geq 1} \Pr ( Q_n \leq \lambda \tau_n - \tau_n^{3/4} \mid \cF_{n-1} ) < \infty, \as,\]
which gives the result.
\end{proof}

The next result gives the final ingredient in 
 the proof of Proposition~\ref{prop:lower-bound}, and a bound that we will use later.

\begin{lemma}
\label{lem:tau-growth}
Let $\alpha \in (1,2)$. Then for some $\eps >0$,
\[ \Pr ( \tau_{n+1} \leq \tau_n^\alpha \mid \cF_n ) \leq \re^{-n^\eps}, \text{ for all but finitely many } n, \as \]
In particular, a.s., $\tau_{n+1} \geq \tau_n^\alpha$ for all but finitely many $n$,
and $\tau_n /\tau_{n+1} \to 0$, a.s.
\end{lemma}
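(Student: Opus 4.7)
The plan is to exploit the stochastic domination $\tau_{n+1} \succeq \zeta(Q_n(X_{n+1}))$ together with the lower bound on $Q_n(X_{n+1})$ from Lemma~\ref{lem:Q-small}, and then feed this into the small-deviation estimate Lemma~\ref{lem:hittingtime}(b) with an exponent chosen strictly between $\alpha$ and $2$. Since $\cF_n$-conditional bounds are what we are after, I will make use of the fact that $X_{n+1}$, and hence $Q_n(X_{n+1})$, is $\cF_n$-measurable.

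Fix $\alpha' \in (\alpha, 2)$, and abbreviate $Q_n := Q_n(X_{n+1})$. By Lemma~\ref{lem:Q-small}, the event $A_n := \{ Q_n \geq \lambda \tau_n - \tau_n^{3/4} \}$ holds for all but finitely many $n$, a.s.; since $\tau_n \to \infty$ by Corollary~\ref{cor:tau-nsq}, eventually $A_n \subseteq \{ Q_n \geq \lambda \tau_n / 2 \}$. Both $\tau_n$ and $Q_n$ are $\cF_n$-measurable, so on $A_n$ we may use Lemma~\ref{lem:hittingtime}(a) to write
\[
\Pr ( \tau_{n+1} \leq \tau_n^{\alpha} \mid \cF_n ) \leq \Pr ( \zeta(Q_n) \leq \tau_n^{\alpha} \mid \cF_n ) \leq \Pr ( \zeta(k) \leq \tau_n^{\alpha} )\big|_{k = \lfloor \lambda \tau_n / 2 \rfloor}.
\]
For $\tau_n$ large (which holds eventually, by Corollary~\ref{cor:tau-nsq}) we have $\tau_n^\alpha \leq c\,k^{\alpha'}$ with $k = \lfloor \lambda \tau_n / 2 \rfloor$ and some constant $c$ depending only on $\alpha,\alpha',\lambda$, because $\alpha < \alpha'$. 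Applying Lemma~\ref{lem:hittingtime}(b) with exponent $\alpha'$ then yields $\Pr(\zeta(k) \leq c k^{\alpha'}) \leq \exp\{-k^{\eps_0}\}$ for some $\eps_0 > 0$ and all $k$ large. Since Corollary~\ref{cor:tau-nsq} gives $k \geq \lambda n^2/4$ eventually, this upper bound is at most $\exp\{-n^{\eps}\}$ for a suitable $\eps>0$, which proves the displayed bound on the event $A_n$; the exceptional set $A_n^c$ is absorbed into the "finitely many $n$" exception.

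The two "in particular" claims follow quickly. The sum $\sum_n \exp\{-n^\eps\}$ converges, so L\'evy's extension of the Borel--Cantelli lemma (as used in the proofs of Lemmas~\ref{lem:sqrt-tail} and~\ref{lem:Q-small}) ensures that the event $\{\tau_{n+1} \leq \tau_n^\alpha\}$ occurs only finitely often, a.s.; equivalently, $\tau_{n+1} \geq \tau_n^\alpha$ for all but finitely many $n$. Combined with $\tau_n \to \infty$ (Corollary~\ref{cor:tau-nsq}), this gives $\tau_n/\tau_{n+1} \leq \tau_n^{1-\alpha} \to 0$ a.s.

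The only mild obstacle is bookkeeping: one must check that the event $\{Q_n \geq \lambda\tau_n/2\}$ on which we apply the stochastic domination is $\cF_n$-measurable (true, since $X_{n+1}$ is $\cF_n$-measurable) and that the small-$\tau_n$/small-$Q_n$ exceptional events are covered by earlier a.s.\ statements (Corollary~\ref{cor:tau-nsq} and Lemma~\ref{lem:Q-small}); no genuinely new estimate is required beyond what is already in the paper.
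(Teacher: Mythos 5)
Your proposal is correct and follows essentially the same route as the paper: stochastic domination of $\tau_{n+1}$ by $\zeta(Q_n(X_{n+1}))$, the lower bound on $Q_n(X_{n+1})$ from Lemma~\ref{lem:Q-small} (the paper keeps the bad event as an explicit indicator $\1{Q_n \leq \lambda\tau_n/2}$ and kills it afterwards, whereas you restrict to the good event from the start --- a cosmetic difference), then Lemma~\ref{lem:hittingtime}(a)--(b) and Corollary~\ref{cor:tau-nsq} to convert $\exp\{-k^{\eps_0}\}$ into $\exp\{-n^{\eps}\}$, and L\'evy's Borel--Cantelli extension for the ``in particular'' claims.
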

\begin{proof}
Let $Q_n := Q_n (X_{n+1})$.
Let $\alpha \in (1,2)$. Given $\cF_{n}$, we have from Lemma~\ref{lem:hittingtime}(a) that $\tau_{n+1}$ stochastically dominates
$\zeta ( Q_{n} )$. Hence
\begin{align*}
\Pr ( \tau_{n+1} \leq \tau_n^\alpha \mid \cF_n) & \leq \Pr ( Q_n \leq \lambda \tau_n /2 \mid \cF_n ) + \Pr ( Q_n > \lambda \tau_n /2, \zeta(Q_n) \leq  \tau_n^\alpha \mid \cF_n ) .\end{align*}
Here by Lemma~\ref{lem:hittingtime}(a) once more, we have
\begin{align*}
\Pr ( Q_n > \lambda \tau_n /2, \zeta(Q_n) \leq \tau_n^\alpha \mid \cF_n ) 
& \leq \Pr ( \zeta ( \lfloor \lambda \tau_n /2 \rfloor ) \leq  \tau_n^\alpha \mid \cF_n ) \\
& \leq \exp \{ -  \lfloor \lambda \tau_n/2 \rfloor^\eps \} ,
\end{align*}
for some $\eps>0$ and all $n$ sufficiently large,
by Lemma~\ref{lem:hittingtime}(b).
By Corollary~\ref{cor:tau-nsq} we have $\tau_n \geq n^2$ for all $n$ sufficiently large, so since $Q_n$ and $\tau_n$
are $\cF_n$-measurable, we get
\[ \Pr ( \tau_{n+1} \leq \tau_n^\alpha \mid \cF_n) \leq \1 {Q_n \leq \lambda \tau_n /2} + \re^{-n^\eps} ,\]
for all but finitely many $n$. It follows from Lemma~\ref{lem:Q-small} that the indicator here vanishes, a.s., for all but finitely many $n$.
The probability bound in the lemma follows.

A consequence of the probability bound is
\[ \sum_{n \geq 0} \Pr ( \tau_{n+1} \leq \tau_n^\alpha \mid \cF_n) < \infty, \as \]
Hence, by L\'evy's extension of the Borel--Cantelli lemma, 
$\tau_{n+1} \geq \tau_n^\alpha$ for all but finitely many $n$. Moreover,
 Corollary~\ref{cor:tau-nsq} shows that $\tau_n \to \infty$ and hence, for all but finitely many $n$,
$\tau_n / \tau_{n+1} \leq \tau_n^{1-\alpha} \to 0$.
\end{proof}

We can now complete the proof of Proposition~\ref{prop:lower-bound}.

\begin{proof}[Proof of Proposition~\ref{prop:lower-bound}.]
Lemma~\ref{lem:tau-growth} shows that there is some $N_1$
with $\Pr ( N_1 < \infty ) =1 $ such that 
\begin{equation}
\label{eq:tau-growth}
\tau_{n+1} \geq \tau_n^\alpha \text{ for all } n \geq N_1 .
\end{equation}
Since (by Corollary~\ref{cor:tau-nsq})
$\tau_n \to \infty$, a.s., we have $\tau_N \geq \re$ for some a.s.~finite $N \geq N_1$.
Then iterating~\eqref{eq:tau-growth} we have $\tau_{N + k} \geq \re^{\alpha^k}$ for all $k \geq 0$.
Take $\tilde \alpha \in (1,\alpha)$. Then
\[ \tau_n \geq \re^{\alpha^{n-N} } \1 { n \geq N } \geq \re^{\tilde \alpha^n} ,\]
for all but finitely many $n$, giving the result.
\end{proof}

\begin{remark*}
\emph{A postiori}, armed with Proposition~\ref{prop:lower-bound}, one can greatly
improve the probability bound in Lemma~\ref{lem:tau-growth}; as stated, however, it is adequate for
its use later in the paper.
\end{remark*}

The next result shows that $T_n / \tau_n \to 1$, a.s., and will be useful in the next section as well as in the proof
of Proposition~\ref{prop:upper-bound}.

\begin{lemma}
\label{lem:T-growth}
Almost surely, for all but finitely many $n$, $T_n/\tau_n \leq 1 + \re^{-6n}$.
\end{lemma}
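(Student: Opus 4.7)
Since $T_n = \sum_{k=1}^n \tau_k$ we have
\begin{equation*}
\frac{T_n}{\tau_n} - 1 \; = \; \frac{1}{\tau_n}\sum_{k=1}^{n-1} \tau_k ,
\end{equation*}
so the task is to show that this sum is at most $\re^{-6n}$ for all but finitely many $n$, a.s. The strategy is to combine the doubly-exponential lower bound on $\tau_n$ coming from Proposition~\ref{prop:lower-bound} with the one-step ratio bound $\tau_{n+1} \geq \tau_n^\alpha$ coming from Lemma~\ref{lem:tau-growth}: together these force the partial sum to be dominated by its \emph{last} term $\tau_{n-1}$, which is in turn much smaller than $\tau_n$.

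The execution is straightforward. Fix any $\alpha \in (1,2)$ (the specific value is immaterial; $\alpha = 3/2$ will do). By Proposition~\ref{prop:lower-bound} and Lemma~\ref{lem:tau-growth}, there is an a.s.~finite $N$ such that for all $n \geq N$,
\begin{equation*}
\tau_n \geq \re^{\alpha^n} , \quad\text{and}\quad \tau_{n+1} \geq \tau_n^\alpha .
\end{equation*}
Because $\tau_n \geq 1$ always, the second bound makes $(\tau_k)_{k \geq N}$ nondecreasing, so
\begin{equation*}
\sum_{k=1}^{n-1} \tau_k \; \leq \; C_N + n \tau_{n-1} ,
\end{equation*}
where $C_N := \sum_{k=1}^{N-1} \tau_k$ is an a.s.~finite random constant. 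The first piece contributes $C_N / \tau_n \leq C_N \re^{-\alpha^n}$, which is at most $\tfrac{1}{2} \re^{-6n}$ eventually because $\alpha^n$ grows exponentially while $6n$ is only linear. For the second piece, $\tau_n \geq \tau_{n-1}^\alpha$ and $\tau_{n-1} \geq \re^{\alpha^{n-1}}$ give
\begin{equation*}
\frac{n \tau_{n-1}}{\tau_n} \; \leq \; n \tau_{n-1}^{1-\alpha} \; \leq \; n \re^{-(\alpha-1)\alpha^{n-1}} ,
\end{equation*}
which is likewise at most $\tfrac{1}{2}\re^{-6n}$ for all but finitely many $n$, since $(\alpha-1)\alpha^{n-1}$ grows exponentially in $n$. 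Adding the two halves yields the claim.

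There is no real obstacle: the analytic content is already contained in the inputs from Section~\ref{sec:time}, and the bound $\re^{-6n}$ is extravagantly weak compared with what is actually true (the true decay is doubly-exponential). The only mild subtlety is bookkeeping: combining the a.s.~events from Proposition~\ref{prop:lower-bound} and Lemma~\ref{lem:tau-growth} to obtain a single random $N$ after which both the size bound and the step-ratio bound hold simultaneously.
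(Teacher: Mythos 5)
Your proof is correct and follows essentially the same route as the paper: both arguments combine the doubly-exponential lower bound of Proposition~\ref{prop:lower-bound} with the one-step ratio bound $\tau_{n+1}\geq\tau_n^\alpha$ of Lemma~\ref{lem:tau-growth} to show $\sum_{k=1}^{n-1}\tau_k/\tau_n$ decays super-exponentially. The only cosmetic difference is how the finitely many pre-$N$ terms are handled — you absorb them into a finite random constant $C_N$ and kill it with $\tau_n\geq\re^{\alpha^n}$, whereas the paper packages them into a constant $K$ multiplying $\tau_{n-1}/\tau_n$ — but the substance is identical.
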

\begin{proof}
Let $\alpha \in (1,2)$.
We have from Lemma~\ref{lem:tau-growth} and Proposition~\ref{prop:lower-bound} that there exists $N$ with $\Pr (N<\infty) =1$
such that $\tau_{n+1} \geq \tau_n^\alpha$ and $\tau_n \geq \re^{\alpha^n}$ for all $n \geq N$.
Set 
\[ K := 1 + \max_{1 \leq m \leq N-1} \prod_{k=m}^{N-1} \frac{\tau_k}{\tau_{k+1}} .\]
Then since $1 \leq \tau_k < \infty$, a.s., we have that $K < \infty$, a.s. Now, for $n > N$,
\begin{align*}
\max_{1 \leq m \leq n-1} \frac{\tau_m}{\tau_n} & = \max_{1 \leq m \leq n-1} \prod_{k=m}^{n-1} \frac{\tau_k}{\tau_{k+1}}   \\
& \leq \max_{1 \leq m \leq n-1} \prod_{k=m}^{N-1} \frac{\tau_k}{\tau_{k+1}} \prod_{k=N}^{n-1} \frac{\tau_k}{\tau_{k+1}} \1 { m \leq N-1}
+ \max_{1 \leq m \leq n-1}  \prod_{k=m}^{n-1} \frac{\tau_k}{\tau_{k+1}} \1 { m \geq N} .
\end{align*}
For $k \geq N$ we have that $\tau_k/\tau_{k+1} \leq \tau_k^{1-\alpha} \leq 1$,
so for any $m \in \{ N, \ldots, n-1\}$,
\[ \prod_{k=N}^{n-1} \frac{\tau_k}{\tau_{k+1}} \leq \prod_{k=m}^{n-1} \frac{\tau_k}{\tau_{k+1}} \leq \frac{\tau_{n-1}}{\tau_n} .\]
Hence, for $n > N$,
\begin{align*}
\max_{1 \leq m \leq n-1} \frac{\tau_m}{\tau_n} 
&
\leq K  \frac{\tau_{n-1}}{\tau_{n}}
\leq K \tau_{n-1}^{1-\alpha}
\leq K 
\re^{-(\alpha-1)\alpha^{n-1}} .\end{align*}
It follows that,  a.s.,
\[ \max_{1 \leq m \leq n-1} \frac{\tau_m}{\tau_n} \leq K \re^{-8n} \leq \re^{-7n},\]
for all but finitely many $n$. Now the result follows from the fact that 
\[ \frac{T_n}{\tau_n} = 1 + \sum_{m=1}^{n-1} \frac{\tau_m}{\tau_n} \leq 1 + n\max_{1 \leq m \leq n-1} \frac{\tau_m}{\tau_n}  . \qedhere \]
\end{proof}

We also need a complementary result to Lemma~\ref{lem:Q-small}.

\begin{lemma}
\label{lem:Q-big}
Almost surely, for all but finitely many $n$, $Q_n (X_{n+1} ) <  \lambda ( 1 + \re^{-5n} ) \tau_n$.
\end{lemma}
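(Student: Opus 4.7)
The plan is to complement Lemma~\ref{lem:Q-small} with an upper bound on $Q_n := Q_n(X_{n+1})$ via a Poisson concentration argument, and then to replace the resulting bound on $T_n$ with one on $\tau_n$ using Lemma~\ref{lem:T-growth}. The starting point is a stochastic domination: from the construction in Section~\ref{sec:discrete}, for any fixed site $k$, the queue length at $k$ at time $T_n$ is (conditionally on $T_n$) stochastically dominated by a Poisson random variable with mean $\lambda T_n$, because it can be written as a sum of independent Poisson increments over the time intervals during which $k$ is not being served, with total mean at most $\lambda T_n$ (the initial inspection $Q_{m_0}(k) = P(\lambda T_{m_0})$ fits consistently with this picture by Poisson additivity). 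Since $X_{n+1} \in \{X_n-1, X_n+1\}$, a union bound gives $\Pr( Q_n > u \mid T_n ) \leq 2 \Pr( P(\lambda T_n) > u \mid T_n )$ for any $u$.

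Next, standard Poisson tail bounds (e.g.~\cite[p.~17]{penrose}) yield
\[ \Pr( P(\lambda T_n) > \lambda T_n + T_n^{3/4} \mid T_n ) \leq \exp( -c T_n^{1/2} ), \]
for some $c > 0$ once $T_n$ is sufficiently large. Since $T_n \geq n$ a.s., it follows that $\Pr(Q_n > \lambda T_n + T_n^{3/4} \mid \cF_{n-1}) \leq 2 \exp(-c n^{1/2})$, which is summable in $n$. L\'evy's extension of the Borel--Cantelli lemma then gives $Q_n \leq \lambda T_n + T_n^{3/4}$ for all but finitely many $n$, a.s.

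To conclude, I would apply Lemma~\ref{lem:T-growth} (giving $T_n \leq \tau_n(1 + \re^{-6n})$ eventually) together with Proposition~\ref{prop:lower-bound} (giving $\tau_n \geq \re^{\alpha^n}$ for some $\alpha \in (1,2)$ eventually). The doubly-exponential growth ensures both $\lambda \tau_n \re^{-6n}$ and $T_n^{3/4} = O(\tau_n^{3/4})$ are negligible compared with $\lambda \tau_n \re^{-5n}$ for large $n$, so combining these estimates gives $Q_n < \lambda \tau_n(1 + \re^{-5n})$ for all but finitely many $n$. The main difficulty is in justifying the stochastic domination in the first step: one has to unpack the inductive construction (whether site $X_{n+1}$ has been inspected or visited by the server before step $n$) and verify that in each case Poisson additivity supplies a Poisson bound with mean at most $\lambda T_n$; this is bookkeeping rather than a genuine obstruction, and it is the mechanism that makes this lemma the mirror image of Lemma~\ref{lem:Q-small}.
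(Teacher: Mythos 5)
Your overall route is the same as the paper's: dominate $Q_n(X_{n+1})$ by (the maximum of two copies of) $P(\lambda T_n)$, apply Poisson concentration together with $T_n\ge n$ and Borel--Cantelli to get $Q_n(X_{n+1})\le \lambda T_n+T_n^{3/4}$ eventually, and then convert $T_n$ into $\tau_n$ via Lemma~\ref{lem:T-growth} and absorb the $T_n^{3/4}$ term using Proposition~\ref{prop:lower-bound}. Those three stages are all correct and match the paper.

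The one place where you have a real gap is exactly the step you set aside as ``bookkeeping'': the stochastic domination. It is not true that the queue at $X_{n+1}$, \emph{conditionally on the server's trajectory}, is a sum of independent Poisson increments. In the construction of Section~\ref{sec:discrete} the raw increments are indeed independent Poissons, but the event that the server followed the particular path it did is informative about the queue at $X_{n+1}$: every time that queue was inspected after it was last emptied and the server went the other way, we learn that the queue was no larger than the competing neighbour (or lost the tie-break). Conditioning on such events changes the law of the queue, so ``Poisson additivity'' alone does not give the bound. The missing ingredient is a monotonicity statement --- the paper's Lemma~\ref{lem:reduced}, that conditioning a random variable $X$ on $\{X\le Y\}$ can only stochastically decrease $X$ --- which shows that each such inspection pushes the queue \emph{down}, so the unconditioned Poisson sum, of mean at most $\lambda T_n$, is still an upper bound in distribution. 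With that observation inserted, your argument closes; without it, the first displayed inequality in your proposal is unjustified. (Minor point: $Q_n(X_{n+1})$ is $\cF_n$-measurable rather than $\cF_{n-1}$-measurable, so it is cleaner to bound the unconditional probabilities and use the ordinary Borel--Cantelli lemma, as the paper does, rather than L\'evy's extension.)
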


\begin{proof}
At time $T_n$, the queue at $X_n$ is emptied and the queues at $X_n \pm 1$ are inspected; let
$Q_L = Q_n (X_n -1)$ and $Q_R = Q_n (X_n + 1)$. Then $Q_n := Q_{n} (X_{n+1} ) = \max \{ Q_L, Q_R \}$.
Suppose that the queue at $X_n -1$ was most recently emptied at some time
$T_L < T_n$, and that the queue at $X_{n} +1$ was most recently emptied
at some time $T_R < T_n$. After the time at which it was most recently emptied, each queue has been inspected
a finite number of times, and,
because the queue was not served at any point after it was last emptied, on each inspection it was found to
be no larger than the queue to which it was being compared. 
Each such inspection therefore
 (see Lemma~\ref{lem:reduced}) stochastically reduces the queue length. Thus
immediately before the inspection at time $T_n$, we have that $Q_L$ is stochastically
dominated by $P ( \lambda (T_n-T_L))$ and $Q_R$ is stochastically dominated by
$P( \lambda (T_n-T_R))$. It follows that $Q_n$ is stochastically
dominated by the maximum of two $P ( \lambda T_n)$ random variables.
Thus
\begin{align*}
 \Pr ( Q_n \geq \lambda T_n + T_n^{3/4} )
& \leq 2 \Pr ( P ( \lambda T_n ) \geq  \lambda T_n + T_n^{3/4} ) .\end{align*}
Now, since $T_n \geq n$ a.s., we get
\begin{align*}
\Pr ( P ( \lambda T_n ) \geq  \lambda T_n + T_n^{3/4} )
& \leq  \sup_{s \geq n} \Pr ( P (\lambda s ) \geq \lambda s + s^{3/4} )  
  \leq \re^{-n^\eps} ,\end{align*}
for some $\eps>0$ and all $n$ sufficiently large,
by standard Poisson tail bounds (see e.g.~\cite[p.~17]{penrose}).
Thus the Borel--Cantelli lemma implies that
$Q_n \leq \lambda T_n + T_n^{3/4}$ for all but finitely many $n$, a.s.
Lemma~\ref{lem:T-growth} shows that $T_n \leq (1 + \re^{-6n} ) \tau_n$
for all but finitely many $n$, so
\[ Q_n \leq \lambda (1 + \re^{-6n} ) \tau_n + 2 \tau_n^{3/4} .\]
Now, by Proposition~\ref{prop:lower-bound}, for all but finitely many $n$, 
\[ 2 \tau_n^{3/4} = 2 \tau_n \cdot \tau_n^{-1/4} \leq \tau_n \cdot \re^{-7n}
\leq \lambda \re^{-6n} \tau_n ,\]
which gives the result.
\end{proof}

Now we can complete the proof of Proposition~\ref{prop:upper-bound}.

\begin{proof}[Proof of Proposition~\ref{prop:upper-bound}.]
Let $\beta > 2$ and set $Q_n := Q_n (X_{n+1})$.
Given $\cF_n$, $\tau_{n+1}$ is distributed as $1+\zeta(Q_n + \nu )$ where $\nu \sim P(\lambda)$.
Thus
\begin{align*}
 \Pr ( \tau_{n+1} > T_n^\beta \mid \cF_n) & = \Pr ( 1 + \zeta(Q_n +\nu)  > T_n^\beta \mid \cF_n) \\
& \leq \Pr (  \zeta (2 Q_n) > T_n^\beta - 1, \nu \leq Q_n \mid \cF_n ) + \Pr ( \nu > Q_n \mid \cF_n ) .\end{align*}
We have by Markov's inequality and Lemma~\ref{lem:Q-small} that
\[ \Pr (\nu > Q_n \mid \cF_n ) \leq \frac{\lambda}{Q_n} \leq \frac{2 }{ \tau_n} \leq \re^{-2n} ,\]
for all but finitely many $n$, a.s., by Proposition~\ref{prop:lower-bound}.
On the other hand,
\begin{align*}  \Pr (  \zeta (2 Q_n) > T_n^\beta - 1, \nu \leq Q_n \mid \cF_n ) & \leq \Pr ( \zeta ( 2 Q_n ) > T_n^\beta -1, Q_n \leq 2 \lambda T_n \mid \cF_n)\\
& {} \qquad {} + \Pr ( Q_n > 2 \lambda T_n \mid \cF_n) \\
& \leq \Pr ( \zeta ( \lceil 4 \lambda T_n \rceil ) > T_n^\beta -1 \mid \cF_n ) + \1{ Q_n > 2 \lambda T_n } \\
& \leq T_n^{-\eps} ,\end{align*}
for some $\eps>0$ and
all but finitely many $n$, a.s., by Lemma~\ref{lem:Q-big}, Lemma~\ref{lem:hittingtime}(c), and the fact that $T_n \geq \tau_n \to \infty$.
Hence, since $T_n \geq \tau_n$, we have from Proposition~\ref{prop:lower-bound} that a.s.,
\[  \Pr ( \tau_{n+1} > T_n^\beta \mid \cF_n) \leq \re^{-n} ,\]
for all but finitely many $n$.
It follows that $\tau_{n+1} \leq T_n^\beta$ for all but finitely many $n$, a.s.
Let $\tilde \beta > \beta$.
Then Lemma~\ref{lem:T-growth} shows that $T_{n+1} \leq 2 T_n^\beta \leq T_n^{\tilde\beta}$, for all $n \geq N$
with $\Pr ( N < \infty ) = 1$.
It follows that
$T_n \leq T_N^{\tilde \beta^n}$ for all $n$.
Since $\tilde \beta > 2$ was arbitrary, the result follows.
\end{proof}

\section{Turning probability}
\label{sec:turning}

For $n \in \N$ define
\begin{equation}
\label{q-def} q_n := \Pr ( \eta_{n+1} \neq \eta_n \mid \cF_{n-1} ). \end{equation}
The main result of this section is the following.

\begin{proposition}
\label{prop:turning}
Let $q := 1/4$. Then there exists $\eps>0$ such that, a.s., for all but finitely many $n$,
$| q_n - q | \leq \re^{-n^\eps}$.
\end{proposition}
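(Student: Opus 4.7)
The event $\{\eta_{n+1}\neq\eta_n\}$ coincides, up to a negligible tie, with $\{A>B\}$, where $A := Q_n(X_{n-1})$ is the just-emptied ``behind'' queue and $B := Q_n(X_n+\eta_n)$ is the ``forward'' queue. By the construction in Section~\ref{sec:discrete}, conditionally on $\cF_{n-1}$ and $\tau_n$, $A \sim P(\lambda\tau_n)$ and $B = B_0 + P_2(\lambda\tau_n)$ with $P_2$ an independent Poisson and $B_0 := Q_{n-1}(X_n+\eta_n)$ being $\cF_{n-1}$-measurable (in the ``fresh site'' case $Q_{n-1}(X_n+\eta_n)=*$, the transition rule samples $B\sim P(\lambda T_n)$, which I decompose as $B_0+P_2$ with $B_0\sim P(\lambda T_{n-1})$ independent of $P_2$). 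A Berry--Esseen bound applied to the Skellam variable $A-P_2$, which has mean $0$ and variance $2\lambda\tau_n$ given $(\cF_{n-1},\tau_n,B_0)$, gives
\[ \Pr(A>B \mid \cF_{n-1},\tau_n,B_0) = \bar\Phi\!\left(\tfrac{B_0}{\sqrt{2\lambda\tau_n}}\right) + O(\tau_n^{-1/2}) . \]
The tie term $\Pr(A=B\mid\cF_{n-1},\tau_n,B_0)$ is of the same order, and by Proposition~\ref{prop:lower-bound} both errors are already doubly-exponentially small in $n$.

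Set $k := Q_{n-1}(X_n)$ and $W_n := 2\lambda\tau_n/k^2$. Conditional on $\cF_{n-1}$, $\tau_n \eqd 1 + \zeta(k+\nu)$ with $\nu \sim P(\lambda)$, so Lemma~\ref{lem:levy} (after absorbing the additive $1$ and the bounded contribution of $\nu$) shows that the conditional law of $W_n$ is uniformly within $O(1/k)$ of $F_S$, with $F_S$ from~\eqref{levy-cdf}. Rewriting $B_0/\sqrt{2\lambda\tau_n} = (B_0/k)/\sqrt{W_n}$ and taking the $\cF_{n-1}$-conditional expectation yields
\[ q_n = \int_0^\infty \bar\Phi\!\left(\tfrac{B_0/k}{\sqrt{w}}\right) \ud F_S(w) + O(1/k) + O(\tau_n^{-1/2}) . \]

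The next step, and the main technical point, is to show that $|B_0/k - 1| \to 0$ a.s.~at a polynomial-in-$\tau_{n-1}$ rate. Lemma~\ref{lem:Q-small} and the proof of Lemma~\ref{lem:Q-big} (which in fact bounds \emph{both} neighbours of $X_n$, not only the winner) together give $k = \lambda\tau_{n-1}(1+O(\tau_{n-1}^{-1/4}))$ a.s. For $B_0$ the key structural fact is that $X_n+\eta_n$ is never a neighbour of $X_{n-1}$, so its last inspection occurred at some step $m\le n-2$; hence $B_0$ is stochastically sandwiched between $P(\lambda(T_{n-1}-T_m))$ and $\lambda\tau_m+P(\lambda(T_{n-1}-T_m))$, using Lemma~\ref{lem:reduced} (inspections stochastically reduce the queue) for the upper side. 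Because $T_m \leq T_{n-2}$, Lemma~\ref{lem:tau-growth} and Lemma~\ref{lem:T-growth} give $T_m/T_{n-1}\to 0$ doubly-exponentially fast, whence $B_0=\lambda\tau_{n-1}(1+o(1))$ with fluctuations of order $\tau_{n-1}^{1/2+o(1)}$ from Poisson concentration. Lipschitz continuity of $w\mapsto\bar\Phi(1/\sqrt w)$ on the bulk of $F_S$ (with care near $w=0$, controlled via the $u^{-3/2}\re^{-u^{-1}/2}$ tail in~\eqref{levy-pdf}) then implies that the displayed integral converges to $\Exp[\bar\Phi(1/\sqrt S)]$ at the same polynomial rate.

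Finally, setting $V:=1/\sqrt S$, \eqref{levy-cdf} gives $\Pr(V>v)=\Pr(S<v^{-2})=2\bar\Phi(v)$ for $v>0$, so $V\eqd|Z|$ for $Z\sim\mathcal N(0,1)$. Taking $Z'$ an independent standard normal,
\[ \Exp[\bar\Phi(1/\sqrt S)] = \Exp[\bar\Phi(|Z|)] = \Pr(Z'>|Z|) = \Pr(Z'+Z>0,\ Z'-Z>0) = \tfrac14 , \]
because $Z'+Z$ and $Z'-Z$ are uncorrelated, hence independent, centered Gaussians. Every error accumulated along the way is polynomially small in $\tau_{n-1}^{-1}$ or $k^{-1}$, and Proposition~\ref{prop:lower-bound} converts polynomial decay in $\tau_{n-1}$ into the claimed $\re^{-n^\eps}$ bound for some $\eps>0$. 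The hardest step will be the $|B_0/k-1|$ estimate in the subcase where $X_n+\eta_n$ has been inspected many times during extended oscillations of the server: there one must combine the losing-queue upper bound, Lemma~\ref{lem:reduced}, and the double-exponential time separation to confirm that the historical contribution to $B_0$ is dwarfed by the fresh Poisson accumulation over $(T_{n-2},T_{n-1})$.
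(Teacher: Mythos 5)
Your proposal follows essentially the same route as the paper's proof: compare the just-emptied queue at $X_n-\eta_n$ (Poisson with mean $\lambda\tau_n$) against the forward queue written as an $\cF_{n-1}$-measurable part $B_0$ plus an independent Poisson of the same mean; apply a Berry--Esseen bound to the resulting Skellam difference; feed in the L\'evy approximation of Lemma~\ref{lem:levy} for the conditional law of $\tau_n$ rescaled by $Q_{n-1}(X_n)^2\asymp\tau_{n-1}^2$; and show $B_0\approx k\approx\lambda\tau_{n-1}$ using the double-exponential time separation (this is exactly the role of $\nu_n''$ and $\nu_n'''$ in the paper). The one genuinely different ingredient is your evaluation of the constant: observing that $S^{-1/2}\eqd|Z|$ and that $Z'+Z$, $Z'-Z$ are independent is a slicker derivation of $1/4$ than the paper's Lemma~\ref{lem:quarter}, which integrates the tail of $F_S$ directly. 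One step as written is not valid, though it is repairable: you dismiss the $O(\tau_n^{-1/2})$ Berry--Esseen error ``by Proposition~\ref{prop:lower-bound}'', but $q_n$ is an $\cF_{n-1}$-conditional probability and $\tau_n$ is \emph{not} $\cF_{n-1}$-measurable, so the almost-sure lower bound on the realized $\tau_n$ says nothing about $\Exp(\tau_n^{-1/2}\mid\cF_{n-1})$, which could a priori be large if the conditional law charges small values of $\tau_n$. The correct repair is the paper's estimate~\eqref{qn6}: split on the event $\{\tau_n\leq\tau_{n-1}^\alpha\}$ and invoke the \emph{conditional} tail bound of Lemma~\ref{lem:tau-growth}; this is also why the achievable rate is $\re^{-n^\eps}$ rather than doubly exponential. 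A second, minor, imprecision: the upper half of your sandwich for $B_0$ should be a stochastic domination by $P(\lambda T_m)$ plus the fresh arrivals (via Lemma~\ref{lem:reduced}, as in the proof of Lemma~\ref{lem:Q-big}) rather than the deterministic $\lambda\tau_m$; since $T_m\leq T_{n-2}\ll\tau_{n-1}^{3/4}$, either version gives $B_0/k\to1$ at the required polynomial rate.
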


We work towards the proof of Proposition~\ref{prop:turning}.
We need the following result.

\begin{lemma}
\label{lem:stable}
Let $S$ be a random variable with the distribution given by~\eqref{levy-cdf}. Then
\[ \sup_{u \in [0, \re^n]} \left| \Pr \Big( \frac{\tau_n}{\tau_{n-1}^2} \leq u \Bigmid \cF_{n-1} \Big) - F_S ( 2 u/\lambda)   \right| \leq \re^{-2n}, \as ,\]
for all but finitely many $n$.
\end{lemma}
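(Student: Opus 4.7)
The plan is to reduce the claim to the Berry--Esseen-type estimate of Lemma~\ref{lem:levy} by conditioning on $\cF_{n-1}$ and exploiting the sharp concentration of the initial queue length $Q_{n-1}(X_n)$ about $\lambda \tau_{n-1}$. Recall from Section~\ref{sec:discrete} that, conditional on $\cF_{n-1}$, $\tau_n - 1 \eqd \zeta(k_n^*)$, where $k_n^* := Q_{n-1}(X_n) + \nu_n$ and $\nu_n \sim P(\lambda)$ is independent of $\cF_{n-1}$.

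First, I would combine Lemmas~\ref{lem:Q-small} and~\ref{lem:Q-big} with standard Poisson tail bounds on $\nu_n$ to produce an event $E_n \in \sigma(\cF_{n-1}, \nu_n)$, with $\Pr(E_n^c \mid \cF_{n-1}) \leq \re^{-c\tau_{n-1}^{1/4}}$ almost surely for all but finitely many $n$, on which $k_n^* = \lambda\tau_{n-1}(1+\delta_n)$ and $|\delta_n| \leq 3\tau_{n-1}^{-1/4}$. This is the crucial input for the whole argument; everything after is deterministic estimation on $E_n$.

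Next, fix $u \in [0, \re^n]$ and set $v := u\tau_{n-1}^2 - 1$. The sub-case $v \leq 0$ is immediate, using that $\tau_n \geq 1$ and that $F_S(2u/\lambda)$ is super-exponentially small for such $u$ by~\eqref{levy-cdf}. For $v > 0$, conditioning further on $\nu_n$ and invoking Lemma~\ref{lem:levy} gives
\[
\bigl| \Pr(\zeta(k_n^*) \leq v \mid \cF_{n-1}, \nu_n) - F_S\bigl(2\lambda v / (k_n^*)^2\bigr) \bigr| \leq C / k_n^*,
\]
which on $E_n$ is at most $C'/\tau_{n-1}$. The algebraic identity
\[
2\lambda v/(k_n^*)^2 = (2u/\lambda)\bigl(1 - (u\tau_{n-1}^2)^{-1}\bigr)(1+\delta_n)^{-2},
\]
together with the fact that $F_S$ is globally Lipschitz (its density~\eqref{levy-pdf} is bounded on $\RP$), allows me to replace $F_S(2\lambda v/(k_n^*)^2)$ by $F_S(2u/\lambda)$ at cost $O(u\tau_{n-1}^{-1/4})$ on $E_n$. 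Averaging over $\nu_n$ and including the contribution from $E_n^c$ (where I simply bound both sides by $1$) yields total error $O\bigl(\re^n \tau_{n-1}^{-1/4} + \re^{-c\tau_{n-1}^{1/4}}\bigr)$, uniformly in $u$.

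Finally, by Proposition~\ref{prop:lower-bound} applied with any $\alpha \in (1,2)$, $\tau_{n-1} \geq \re^{\alpha^{n-1}}$ eventually almost surely, so the error bound is $o(\re^{-2n})$ with ample room. The main obstacle is handling the Lipschitz transfer uniformly up to $u = \re^n$, but the doubly exponential lower bound on $\tau_{n-1}$ easily swallows any such loss; the statement essentially says that the emptying time of a critically-loaded queue starting from approximately $\lambda\tau_{n-1}$ customers, rescaled by $\tau_{n-1}^2$, is close in distribution to a L\'evy law, with any randomness in the initial data absorbed by the concentration estimates from Lemmas~\ref{lem:Q-small} and~\ref{lem:Q-big}.
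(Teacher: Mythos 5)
Your argument is essentially the paper's own proof: the paper likewise reduces to Lemma~\ref{lem:levy} after pinning $Q_{n-1}(X_n)$ between $\lambda\tau_{n-1}-\tau_{n-1}^{3/4}$ and $\lambda(1+\re^{-5(n-1)})\tau_{n-1}$ via Lemmas~\ref{lem:Q-small} and~\ref{lem:Q-big}, controls the extra Poisson arrivals by a crude tail bound, and absorbs all errors using the boundedness of the density~\eqref{levy-pdf} together with the doubly exponential lower bound on $\tau_{n-1}$ from Proposition~\ref{prop:lower-bound} (the paper organises this as separate upper and lower bounds using the monotonicity of $\zeta$ from Lemma~\ref{lem:hittingtime}(a) rather than conditioning on $\nu_n$, but that is only packaging). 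One small correction: Lemma~\ref{lem:Q-big} only gives the one-sided relative error $O(\re^{-5n})$, not $O(\tau_{n-1}^{-1/4})$, so your event $E_n$ should carry $|\delta_n|=O(\re^{-5n})$ and the Lipschitz-transfer cost becomes $O(u\,\re^{-5n})=O(\re^{-4n})$ for $u\le\re^{n}$ --- still comfortably below $\re^{-2n}$, so nothing breaks.
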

\begin{proof}
We have from Lemma~\ref{lem:hittingtime}(a) that, given $\cF_{n-1}$,
 $\tau_{n}$ stochastically dominates
$\zeta ( Q_{n-1} )$, where $Q_{n-1} := Q_{n-1} (X_{n})$.
It follows that for $u \in \RP$,
\begin{align*}
\Pr ( \tau_n \leq u \tau_{n-1}^2 \mid \cF_{n-1} ) & \leq \Pr ( \zeta (Q_{n-1} ) \leq u \tau_{n-1}^2 \mid \cF_{n-1} ) \\
& \leq \Pr ( \zeta (Q_{n-1} ) \leq u \tau_{n-1}^2, Q_{n-1} \geq \lambda \tau_{n-1} - \tau_{n-1}^{3/4} \mid \cF_{n-1} ) \\
& {} \qquad {} + \Pr ( Q_{n-1} \leq \lambda \tau_{n-1} - \tau_{n-1}^{3/4} \mid \cF_{n-1} ) \\
& \leq \Pr ( \zeta ( \lfloor \lambda \tau_{n-1} - \tau_{n-1}^{3/4} \rfloor ) \leq u \tau_{n-1}^2 \mid \cF_{n-1} ) ,\end{align*}
for all but finitely many $n$, a.s., by Lemma~\ref{lem:Q-small}. Write $\gamma_{n-1} :=  \tau_{n-1} - \lambda^{-1} \tau_{n-1}^{3/4}$. 
Then  
\begin{align*}
\Pr ( \zeta ( \lfloor \lambda \gamma_{n-1} \rfloor ) \leq u \tau_{n-1}^2 \mid \cF_{n-1} ) 
& \leq \Pr ( \lfloor \lambda \gamma_{n-1} \rfloor^{-2} \zeta ( \lfloor \lambda \gamma_{n-1} \rfloor ) \leq u  ( \lambda \gamma_{n-1} - 1)^{-2} \tau_{n-1}^2 \mid \cF_{n-1} ) \\
& \leq F_S ( 2 u \lambda ( \lambda \gamma_{n-1} - 1)^{-2} \tau_{n-1}^2 ) + C \gamma_{n-1}^{-1},
\end{align*}
for some $C < \infty$ and $\eps >0$ not depending on $u$ or $n$, by Lemma~\ref{lem:levy}.
By Proposition~\ref{prop:lower-bound} we have $\gamma_{n-1}^{-1} \leq \re^{-3n}$
for all but finitely many $n$, while
\[ | ( \lambda \gamma_{n-1} - 1)^{-2} \tau_{n-1}^2 - \lambda^{-2} | \leq C \tau_{n-1}^{-1/4} \leq \re^{-4n} ,\]
for all but finitely many $n$. Since the density of $S$ as given in~\eqref{levy-pdf} is uniformly bounded,
\[  F_S ( 2 u \lambda ( \lambda \gamma_{n-1} - 1)^{-2} \tau_{n-1}^2 )  \leq F_S ( 2 \lambda^{-1} u) + C u \re^{-4n} ,\]
for all $u$. Combining our estimates gives, for all but finitely many $n$, a.s., 
\begin{equation}
\label{stable-upper}
 \Pr ( \tau_n \leq u \tau_{n-1}^2 \mid \cF_{n-1} ) \leq F_S ( 2 \lambda^{-1} u) + \re^{-2n}, \text{ for all } u \in [0,\re^n],
\end{equation}
which is one half of the required result.

For the corresponding lower bound, we have that, given $\cF_{n-1}$,
$\tau_n$ is distributed as $1 + \zeta( Q_{n-1} + \nu)$ where $\nu \sim  P (\lambda )$. Then,
by Lemma~\ref{lem:hittingtime}(a),
\begin{align*}
\Pr ( \tau_n \leq u \tau_{n-1}^2 \mid \cF_{n-1} ) &
\geq \Pr ( 1 + \zeta ( Q_{n-1} + \lfloor \re^n \rfloor ) \leq u \tau_{n-1}^2 , \nu \leq \re^n \mid \cF_{n-1} )\\
& \geq \Pr ( \zeta ( Q_{n-1} + \lfloor \re^n \rfloor ) \leq u \tau_{n-1}^2 -1 \mid \cF_{n-1} ) - \Pr ( \nu > \re^n \mid \cF_{n-1} ).
\end{align*}
By Markov's inequality, $\Pr ( \nu > \re^n \mid \cF_{n-1} ) \leq \lambda \re^{-n}$. 
Let $E_n = \{ Q_{n} \leq \lambda ( 1 + \re^{-5n} ) \tau_{n} \}$. Then we have that
\begin{align*}
& {} \quad ~ {}\Pr ( \zeta (Q_{n-1} + \lfloor \re^n \rfloor ) \leq  u \tau_{n-1}^2 -1 , E_{n-1} \mid \cF_{n-1} )  \\
& \geq \Pr ( \zeta( \lfloor \lambda (1+\re^{-5(n-1)} ) \tau_{n-1} \rfloor + \lfloor \re^n \rfloor ) \leq  u \tau_{n-1}^2 -1 \mid \cF_{n-1} )  - \2{  E^\rc_{n-1}  } \\
& \geq \Pr ( \zeta( \lfloor \lambda (1+\re^{-5(n-1)} ) \tau_{n-1}   +   \re^n \rfloor ) \leq  u \tau_{n-1}^2 -1 \mid \cF_{n-1} ),
\end{align*}
for all but finitely many $n$, a.s., by Lemma~\ref{lem:Q-big}. Set $\gamma_{n-1} = ( 1+\re^{-5(n-1)} ) \tau_{n-1} + \lambda^{-1} \re^n$.
Then
\begin{align*}
\Pr ( \zeta ( \lfloor \lambda \gamma_{n-1} \rfloor ) \leq  u \tau_{n-1}^2 -1 \mid \cF_{n-1} ) &
\geq \Pr ( \lfloor \lambda \gamma_{n-1}\rfloor^{-2}  \zeta ( \lfloor \lambda \gamma_{n-1} \rfloor ) 
\leq   \lambda^{-2} \gamma_{n-1}^{-2} (  u \tau_{n-1}^2 -1 ) \mid \cF_{n-1} ) \\
& \geq F_S ( 2 \lambda^{-1} \gamma_{n-1}^{-2} (  u \tau_{n-1}^2 -1 ) ) - \re^{-3n} ,
\end{align*}
for all but finitely many $n$, a.s.,
by Lemma~\ref{lem:levy} and Proposition~\ref{prop:lower-bound}, since $\gamma_{n-1} \geq \tau_{n-1}$.
Proposition~\ref{prop:lower-bound} also shows that $\gamma_{n-1} \geq \tau_{n-1} > \re^{2n}$ and
\[ 1 \geq \gamma_{n-1}^{-2} \tau_{n-1}^2  \geq 1 - \re^{-4n} ,\]
for all but finitely many $n$, a.s. Thus
\[ F_S ( 2\lambda^{-1} \gamma_{n-1}^{-2} (  u \tau_{n-1}^2 -1 ) )
\geq F_S ( 2\lambda^{-1} u - 2 \lambda^{-1} u \re^{-4n} - 2 \lambda^{-1} \re^{-4n} ) 
\geq F_S ( 2\lambda^{-1} u ) - \re^{-2n} ,\]
provided $u \in [0,\re^n]$, using the fact that the density of $S$
given by~\eqref{levy-pdf} is uniformly bounded. Combined with~\eqref{stable-upper}
this completes the proof.
\end{proof}

The origin of the value $1/4$ in Proposition~\ref{prop:turning} is the following fact.

\begin{lemma}
\label{lem:quarter}
Let $S$ be a random variable with the distribution given by~\eqref{levy-cdf}, 
and let  $Z$ be a standard normal random variable, independent of $S$. Then
\[ \Pr ( Z \sqrt{S} > 1 ) = \frac{1}{4} .\]
\end{lemma}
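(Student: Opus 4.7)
The plan is to reduce the statement to an elementary symmetry computation for two independent standard normals, by first identifying the Lévy distribution of $S$ with the distribution of $1/W^2$ for $W \sim N(0,1)$.

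First I would read off from the definition \eqref{levy-cdf} that $F_S (u) = 2 \bar\Phi ( u^{-1/2} )$ is the same as $\Pr ( |W| > u^{-1/2} ) = \Pr ( W^2 > 1/u ) = \Pr ( 1/W^2 \leq u )$ for a standard normal $W$ and all $u > 0$. Hence $S \eqd 1/W^2$, and so, taking $W$ independent of the $Z$ in the statement,
\[
 Z \sqrt{S} \eqd Z / |W| .
\]
Therefore $\Pr ( Z \sqrt{S} > 1 ) = \Pr ( Z > |W| )$.

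Second, I would argue $\Pr (Z > |W| ) = 1/4$ as follows. On the event $\{ Z \leq 0 \}$ we have $Z \leq 0 \leq |W|$, so
\[
\Pr ( Z > |W| ) = \Pr ( Z > 0 , \, Z > |W| ) = \Pr ( Z > 0 , \, |Z| > |W| ) .
\]
For a standard normal, $\mathrm{sign}(Z)$ and $|Z|$ are independent, and $\mathrm{sign}(Z)$ is also independent of $|W|$ by the independence of $Z$ and $W$. Hence $\mathrm{sign}(Z)$ is independent of the pair $(|Z|, |W|)$, giving
\[
\Pr ( Z > 0, \, |Z| > |W| ) = \Pr ( Z > 0 ) \Pr ( |Z| > |W| ) = \tfrac{1}{2} \cdot \tfrac{1}{2} = \tfrac{1}{4} ,
\]
where $\Pr ( |Z| > |W| ) = 1/2$ follows from the exchangeability of the i.i.d.\ continuous pair $(|Z|, |W|)$.

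There is essentially no obstacle here: the one identification to be careful about is the distributional equality $S \eqd 1/W^2$, which is a direct manipulation of \eqref{levy-cdf}. Once that is in hand, the rest is pure symmetry. An alternative, slightly less elegant, route would be to condition on $S = s$ and integrate $\bar\Phi ( s^{-1/2} )$ against the Lévy density \eqref{levy-pdf}, but this requires an explicit integral evaluation, whereas the symmetry argument avoids any calculation.
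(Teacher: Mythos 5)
Your proof is correct, and it takes a genuinely different route from the paper's. You first observe that \eqref{levy-cdf} says exactly $F_S(u) = \Pr(1/W^2 \le u)$ for a standard normal $W$, i.e.\ $S \eqd 1/W^2$, and then reduce the claim to $\Pr(Z > |W|) = 1/4$ for independent standard normals $Z, W$, which follows from the independence of $\mathrm{sign}(Z)$ from the pair $(|Z|,|W|)$ together with $\Pr(|Z|>|W|) = 1/2$ by exchangeability (ties having probability zero). All of these steps are sound. The paper instead writes $\Pr(Z\sqrt{S}>1) = \Exp \bar\Phi(S^{-1/2})$, applies the layer-cake formula to get $\int_0^{1/2} \bar F_S\bigl( (\bar\Phi^{-1}(u))^{-2} \bigr)\,\ud u$, and uses \eqref{levy-cdf} to see that the integrand collapses to $1-2u$, whence the integral is $1/4$. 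The two arguments are of comparable length; yours is the more conceptual one, exposing the identity $Z\sqrt{S} \eqd Z/|W|$ (a standard Cauchy-type ratio) and explaining the value $1/4$ as a pure symmetry, whereas the paper's computation is a direct manipulation of the cdf that happens to telescope. Either would serve; your symmetry argument arguably makes it clearer why the answer is exactly $1/4$ and not some other constant.
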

\begin{proof}
Recall that $\Phi$ denotes the distribution function
of the standard normal distribution, and that $\bar\Phi (u) := 1 - \Phi (u)$.
Observe that
\[ p := \Pr ( Z >  S^{-1/2} ) = \Exp [ \Pr ( Z >   S^{-1/2} \mid S ) ] = \Exp \bar \Phi (  S^{-1/2}  ) .\]
Here $\bar \Phi : \R \to (0,1)$ is a strictly decreasing, continuous function
with $\lim_{x \to -\infty} \bar \Phi (x) =0$ and $\lim_{x \to +\infty} \bar \Phi(x) =1$, 
so $\bar \Phi$ has a strictly decreasing inverse $\bar\Phi^{-1} : (0,1) \to \R$, which is positive
on $(0,1/2)$ and negative on $(1/2,1)$, and
\begin{align}
\label{q-integral}
 p & = \int_0^1 \Pr ( \bar\Phi ( S^{-1/2}  ) \geq u ) \ud u = \int_0^{1/2} \Pr ( S \geq   ( \bar\Phi^{-1} (u) )^{-2}  ) \ud u \nonumber\\
& = \int_0^{1/2} \bar F_S (   ( \bar\Phi^{-1} (u) )^{-2}   ) \ud u ,\end{align}
where $\bar F_S (u ) := 1 - F_S(u)$ for $u \in \RP$. 
Now applying the formula in~\eqref{levy-cdf} we get $\bar F_S (   ( \bar\Phi^{-1} (u) )^{-2}   ) = 1 - 2 \bar \Phi ( \bar \Phi^{-1} (u) )$, so that
\[ p = \int_0^{1/2} ( 1 -2 u) \ud u = \frac{1}{4} ,\]
as claimed.
\end{proof}

Now we can complete the proof of Proposition~\ref{prop:turning}.

\begin{proof}[Proof of Proposition~\ref{prop:turning}.]
Given $\cF_{n-1}$, we have that at time $T_{n-1}$, $Q_{n-1} (X_{n-1}) = 0$
and the server then heads to $X_n = X_{n-1} + \eta_n$. At time $T_n$,
after serving the queue at $X_n$, the server inspects the queues at $X_n \pm 1$.

First we obtain an upper bound on $q_n$, the probability that the server changes direction.
We have
\[ q_n \leq \Pr ( Q_n (X_{n} - \eta_n ) \geq Q_n (X_n + \eta_n ) \mid \cF_{n-1} ) .\]
Note that $Q_{n-1} (X_n - \eta_n ) = Q_{n-1} (X_{n-1}) = 0$, i.e., the queue at $X_n - \eta_n$
was empty at time $T_{n-1}$. Thus $Q_n (X_n - \eta_n )$ is Poisson with mean $\lambda$ times $\tau_n = T_n - T_{n-1}$.
Set $\nu_n := Q_n ( X_n - \eta_n)$.

The queue at $X_n + \eta_n$,
which is neither $X_n$ nor $X_{n-1} = X_n - \eta_n$,
 was, prior to time $T_n$,
 last inspected by the server no more recently than at time $T_{n-2}$ (when the server decided to move to $X_{n-1}$).
Thus the number of customers at the queue at time $T_n$
is at least $\nu'_n + \nu''_n$ where
\begin{align*}
\nu'_n := Q_n (X_n + \eta_n ) - Q_{n-1} (X_n + \eta_n ) , \text{ and } \nu''_n := Q_{n-1} (X_n + \eta_n ) - Q_{n-2} (X_n + \eta_n ) .
\end{align*}
So we have
\[ q_n \leq \Pr ( \nu_n \geq \nu_n' + \nu_n''  \mid \cF_{n-1} ) , \]
where $\nu_n \sim P ( \lambda \tau_n )$ and $\nu_n' \sim P ( \lambda \tau_n)$
are both $\cF_{n}$-measurable, and are conditionally independent given $\tau_n$, and 
$\nu_n'' \sim P (\lambda \tau_{n-1})$ is $\cF_{n-1}$-measurable.
 Define
\begin{align*}
Z_n & = (\lambda \tau_n)^{-1/2} ( \nu_n - \lambda \tau_n); \\
Z'_n & =  (\lambda \tau_n)^{-1/2} ( \nu_n' - \lambda \tau_n); \\
Z''_n & = (\lambda \tau_{n-1})^{-1/2} ( \nu_n'' - \lambda \tau_{n-1}). \end{align*}
Then we get
\begin{align*}
q _n & \leq \Pr ( Z_n \geq Z_n' + ( \tau_{n-1} / \tau_n )^{1/2} Z_n'' + \lambda^{1/2} \tau_n^{-1/2} \tau_{n-1} \mid \cF_{n-1} ) .\end{align*}
Hence, writing $W_n := Z_n - Z_n'$, we have
\begin{align}
\label{qn0}
q_n & \leq \Pr ( W_n \geq \lambda^{1/2} \tau_n^{-1/2} \tau_{n-1}  - \re^{-n} \mid \cF_{n-1} )
+ \Pr ( | Z_n ''| (\tau_{n-1} / \tau_n )^{1/2} \geq \re^{-n} \mid \cF_{n-1} ) ,
\end{align} 
where, for $p \in (0,1/2)$,
\begin{align*}
\Pr ( | Z_n ''| (\tau_{n-1} / \tau_n )^{1/2} \geq \re^{-n} \mid \cF_{n-1} ) 
& \leq \Pr ( | Z_n'' | \geq \tau_{n-1}^{p} \mid \cF_{n-1} ) + \Pr ( \tau_{n} \leq \tau_{n-1}^{1+2p} \re^{2n} \mid \cF_{n-1}) .
\end{align*}
Since $Z_n''$ is $\cF_{n-1}$-measurable, we have that $\Pr ( | Z_n'' | \geq \tau_{n-1}^{p} \mid \cF_{n-1} ) = \1 { | Z_n'' | \geq \tau_{n-1}^{p} }$. We show that this event occurs only finitely often.
Set $\cF_{n-2}^+ := \sigma (\cF_{n-2}, \tau_{n-1} )$; note that
$\nu''$ (and hence $Z_n''$) only depend on $\cF^+_{n-2}$ via $\tau_{n-1}$.
We have that
\begin{align*}
\Pr ( | Z_n ''| \geq \tau_{n-1}^p \mid \cF_{n-2}^+ )
& \leq \Pr ( | Z_n''| \geq \tau_{n-1}^p, \tau_{n-1} \geq n \mid \cF_{n-2}^+ )
+ \1 { \tau_{n-1} \leq n } \\ 
& \leq \Pr ( | Z_n''| \geq \tau_{n-1}^p, \tau_{n-1} \geq n \mid \cF_{n-2}^+ ) ,\end{align*}
for all but finitely many $n$, a.s., by Proposition~\ref{prop:lower-bound}.
Now
\begin{align*}
 \Pr ( | Z_n''| \geq \tau_{n-1}^p, \tau_{n-1} \geq n \mid \cF_{n-2}^+ )
& = \Pr ( | \nu_n'' - \lambda \tau_{n-1} | \geq \lambda^{1/2} \tau_{n-1}^{(1/2)+p} , \tau_{n-1} \geq n \mid \cF_{n-2}^+ ) \\
& \leq \sup_{s \geq n } \Pr ( | P (\lambda s) - \lambda s | \geq \lambda^{1/2} s^{(1/2)+p} ) \\
& \leq \re^{-n^\eps} ,\end{align*}
for some $\eps>0$ and all $n$ sufficiently large, by Poisson concentration (see e.g.~\cite[p.~17]{penrose}).
So we conclude that $\sum_{n \geq 2} \Pr ( | Z_n ''| \geq \tau_{n-1}^p \mid \cF_{n-2}^+ ) < \infty$, a.s.,
and, since $\{ | Z_n ''| \geq \tau_{n-1}^p \} \in \cF_{n-1}^+$,
L\'evy's extension of the Borel--Cantelli lemma implies that
$|Z_n''| < \tau_{n-1}^p$ for all but finitely many $n$, a.s.
On the other hand, we have from Proposition~\ref{prop:lower-bound} that a.s., for all but finitely many $n$,
$\tau_{n-1}^{1+2p} \re^{2n} \leq \tau_{n-1}^\alpha$,
provided that we choose $\alpha \in (1+2p,2)$. Hence by Lemma~\ref{lem:tau-growth} we have for some $\eps>0$, a.s.,
\[ \Pr ( \tau_{n} \leq \tau_{n-1}^{1+2p} \re^{2n} \mid \cF_{n-1}) \leq \Pr ( \tau_{n} \leq \tau_{n-1}^\alpha \mid \cF_{n-1} ) \leq \re^{-n^\eps} ,\]
for all $n$ sufficiently large. Combining these estimates we get that for some $\eps>0$,
a.s., for all but finitely many $n$,
\begin{equation}
\label{Z''}
\Pr ( | Z_n ''| (\tau_{n-1} / \tau_n )^{1/2} \geq \re^{-n} \mid \cF_{n-1} ) \leq \re^{-n^\eps} .
\end{equation}
Thus from~\eqref{qn0} with~\eqref{Z''} we see that
\begin{equation}
\label{qn1}
 q_n \leq  \Pr ( W_n \geq    \lambda^{1/2} \tau_n^{-1/2} \tau_{n-1} - \re^{-n} \mid \cF_{n-1} )
+  \re^{-n^\eps} , \as,\end{equation}
for all but finitely many $n$.

For $u \in \R$ write $G_n (u) := \Pr (W_n \geq u \mid \tau_n)$. 
Set $S_n := \tau_n / \tau_{n-1}^2$ and, as above, set $\cF_{n-1}^+ := \sigma ( \cF_{n-1}, \tau_n)$. Then we have
\begin{align*}
\Pr ( W_n \geq \lambda^{1/2} S_n^{-1/2} - \re^{-n} \mid \cF_{n-1} )
= \Exp \big( \Pr ( W_n \geq \lambda^{1/2} S_n^{-1/2} - \re^{-n} \mid \cF^+_{n-1} ) \bigmid \cF_{n-1} \big) .\end{align*}
Since $W_n$ depends on $\cF^+_{n-1}$ only through $\tau_n$, and  $S_n$ is $\cF_{n-1}^+$-measurable, we have
\[ \Pr ( W_n \geq \lambda^{1/2} S_n^{-1/2} - \re^{-n} \mid \cF^+_{n-1} )  = G_n ( \lambda^{1/2} S_n^{-1/2} -\re^{-n} ) .\]
Since $W_n = (\lambda \tau_n)^{-1/2} (\nu_n - \nu_n')$ we have from Lemma~\ref{lem:poisson} and the fact that $\tau_n \geq 1$ that
\[ G_n ( u ) \leq \bar \Phi ( u /\sqrt{2} ) + C \tau_n^{-1/2} \log ( 1 +\tau_n ), \as,  \]
for  all $n$ and all $u \in \R$.
It follows that
\begin{align}
\label{qn5}
 \Pr ( W_n \geq \lambda^{1/2} S_n^{-1/2} - \re^{-n} \mid \cF_{n-1} ) & = \Exp ( G_n ( \lambda^{1/2} S_n^{-1/2} - \re^{-n} ) \mid \cF_{n-1} ) 
 \nonumber\\
& \leq \Exp ( \bar \Phi ( \lambda^{1/2} S_n^{-1/2} /\sqrt{2} - \re^{-n} ) \mid \cF_{n-1} ) 
\nonumber\\
& {} \qquad {}
 + C \Exp ( \tau_n^{-1/2} \log ( 1 +\tau_n ) \mid \cF_{n-1} ) ,
\end{align}
for all $n$ sufficiently large.
Here we have, since $\tau_n \geq 1$, for $\alpha \in (1,2)$, a.s.,
\begin{align}
\label{qn6}
 \Exp ( \tau_n^{-1/2} \log ( 1 +\tau_n ) \mid \cF_{n-1} ) & \leq \Pr ( \tau_n \leq \tau_{n-1}^\alpha \mid \cF_{n-1} ) + \tau_{n-1}^{-\alpha/3}   \leq \re^{-n^\eps} + \re^{-n} ,\end{align}
for all but finitely many $n$, by Lemma~\ref{lem:tau-growth} and Proposition~\ref{prop:lower-bound}. 
Moreover, since the standard normal density is uniformly bounded, we have that for some $C < \infty$ and all $u \in \R$,
$\bar \Phi (  u - \re^{-n} ) \leq \bar \Phi (u) + C \re^{-n}$. 
Thus, for some $\eps>0$,
\begin{equation}
\label{qn2}
 \Pr ( W_n \geq \lambda^{1/2} S_n^{-1/2} - \re^{-n} \mid \cF_{n-1} ) \leq \Exp ( \bar \Phi ( \lambda^{1/2} S_n^{-1/2} /\sqrt{2} ) \mid \cF_{n-1} )  + \re^{-n^\eps} , \as, \end{equation}
for all but finitely many $n$.

Similarly to~\eqref{q-integral}, we have
\begin{align*}
\Exp ( \bar \Phi   ( \lambda^{1/2} S_n^{-1/2} / \sqrt{2}) \mid \cF_{n-1} )
& = \int_0^{1/2} \Pr ( S_n \geq \lambda (\bar \Phi^{-1} (u))^{-2} / 2 \mid \cF_{n-1} ) \ud u  .\end{align*}
Set $a_n := \bar \Phi ( ({\lambda/2})^{1/2} \re^{-n/2} )$; then $a_n \in (0,1/2)$ with $a_n \to 1/2$, and
$\lambda (\bar \Phi^{-1} (u))^{-2} / 2 \in [0,\re^n]$ for $u \in (0,a_n)$. Thus, by Lemma~\ref{lem:stable},
\begin{align}
\label{qn3}
\Exp ( \bar \Phi   ( \lambda^{1/2} S_n^{-1/2} / \sqrt{2}) \mid \cF_{n-1} )
& \leq \int_0^{a_n} \left( \bar F_S ( ( \bar \Phi^{-1} (u))^{-2} ) + \re^{-2n} \right) \ud u  + \frac{1}{2} - a_n \nonumber\\
& \leq q + \re^{-2n}  + C \re^{-n/2}, 
\end{align}
by~\eqref{q-integral} and the fact that the standard normal density is uniformly bounded;
here $q=1/4$ is the probability in~\eqref{q-integral} and Lemma~\ref{lem:quarter}.
Combining~\eqref{qn1}, \eqref{qn2}, and \eqref{qn3} we obtain $q_n \leq q + \re^{-n^\eps}$ for all but finitely many $n$, a.s.

Now we obtain a lower bound on $q_n$. In addition to $\nu_n, \nu_n', \nu_n''$ defined above,
also define $\nu_n''' := Q_{n-2} (X_n + \eta_n)$. Then $\nu_n'''$ is $\cF_{n-1}$-measurable.
With $W_n$ and $Z_n''$ as defined above, we have
\begin{align*}
q_n & \geq \Pr ( Q_n (X_n - \eta_n ) > Q_n ( X_n + \eta_n ) \mid \cF_{n-1} ) \\
& = \Pr ( \nu_n > \nu_n' + \nu_n'' + \nu_n''' \mid \cF_{n-1} ) \\
& = \Pr (W_n > (\tau_{n-1} / \tau_n)^{1/2} Z_n'' + \lambda^{1/2} \tau_n^{-1/2} \tau_{n-1}
+ \lambda^{-1/2} \tau_n^{-1/2} \nu_n''' \mid \cF_{n-1} ) \\
& \geq \Pr (W_n > \lambda^{1/2} \tau_n^{-1/2} \tau_{n-1} + \re^{-n} + \lambda^{-1/2} \tau_n^{-1/2} \nu_n''' \mid \cF_{n-1} ) \\
& {} \qquad {} - \Pr ((\tau_{n-1} / \tau_n)^{1/2} |Z_n''| \geq \re^{-n} \mid \cF_{n-1} ).
\end{align*}
Applying~\eqref{Z''}, we see that a.s., for all but finitely many $n$,
\begin{align*}
 q_n & \geq \Pr (W_n > \lambda^{1/2} \tau_n^{-1/2} \tau_{n-1} + \re^{-n} + \lambda^{-1/2} \tau_n^{-1/2} \nu_n''' \mid \cF_{n-1} )  - \re^{-n^\eps} \\
& \geq \Pr ( W_n > \lambda^{1/2} \tau_n^{-1/2} \tau_{n-1} + 2 \re^{-n} \mid \cF_{n-1} )
- \Pr  ( \lambda^{-1/2} \tau_n^{-1/2} \nu_n''' \geq \re^{-n} \mid \cF_{n-1} ) - \re^{-n^\eps} .\end{align*}
Here we have that, a.s., for all but finitely many $n$,
\begin{align*}
\Pr  ( \lambda^{-1/2} \tau_n^{-1/2} \nu_n''' \geq \re^{-n} \mid \cF_{n-1} )
& \leq \Pr ( \nu_n''' \geq \lambda^{1/2} \tau_{n-1}^{3/4} \re^{-n} \mid \cF_{n-1} )
+ \Pr ( \tau_n \leq \tau_{n-1}^{3/2} \mid \cF_{n-1} ) \\
& \leq \1 { \nu_n''' \geq \lambda^{1/2} \tau_{n-1}^{3/4} \re^{-n} } + \re^{-n^\eps} ,\end{align*}
by Lemma~\ref{lem:tau-growth}.
By Lemma~\ref{lem:T-growth} and Lemma~\ref{lem:tau-growth} again, we have that a.s.,
for all but finitely many $n$, $T_{n-2} \leq 2 \tau_{n-2}$ and $\tau_{n-2} \leq \tau_{n-1}^{2/3}$,
so, by Proposition~\ref{prop:lower-bound}, since $\tau_{n-1}^{1/12} \re^{-n} \to \infty$,  
\[ \lambda^{1/2} \tau_{n-1}^{3/4} \re^{-n} = \lambda^{1/2} \tau_{n-1}^{1/12} \re^{-n} \tau_{n-1}^{2/3} \geq 2 \lambda T_{n-2} .\]
Thus, a.s., for all but finitely many $n$,
\begin{equation}
\label{qn4}
 \1 { \nu_n''' \geq \lambda^{1/2} \tau_{n-1}^{3/4} \re^{-n} } \leq \1 { \nu_n''' \geq 2 \lambda T_{n-2} } .\end{equation}
The queue at $X_n + \eta_n$,
which is neither $X_n$ nor $X_{n-1} = X_n - \eta_n$,
 was, prior to time $T_n$,
 last inspected by the server no more recently than at time $T_{n-2}$, at which point the server decided to move to $X_{n-1}$
(and not $X_n +\eta_n$). 
Thus $\nu_n'''$ is stochastically dominated by $P( \lambda T_{n-2} )$,
so 
\begin{align*}
\Pr ( \nu_n''' \geq 2 \lambda T_{n-2} ) & \leq \Pr ( P (\lambda T_{n-2} ) \geq 2 \lambda T_{n-2} ) \\
& \leq \sup_{s \geq n-2} \Pr ( P ( \lambda s) \geq 2 \lambda s ) ,\end{align*}
using the fact that $T_{n-2} \geq n-2$, a.s. Then by standard Poisson tail bounds (see e.g.~\cite[p.~17]{penrose})
we have that this last quantity is bounded by $\re^{-\delta n}$ for some $\delta >0$ and all $n$ sufficiently
large. Hence the Borel--Cantelli lemma shows that the indicator
random variable in~\eqref{qn4} is a.s.~equal to $0$ for all but finitely many~$n$.
Thus, a.s., for all but finitely many $n$,
\[ \Pr  ( \lambda^{-1/2} \tau_n^{-1/2} \nu_n''' \geq \re^{-n} \mid \cF_{n-1} ) \leq  \re^{-n^\eps} .\]
It follows that, for some $\eps>0$, a.s., for all but finitely many $n$,
\[ q_n     \geq \Pr ( W_n > \lambda^{1/2} \tau_n^{-1/2} \tau_{n-1} + 2 \re^{-n} \mid \cF_{n-1} )  - \re^{-n^\eps}.\]
The estimation of the main term here proceeds in a similar way to in the upper bound.
Similarly to~\eqref{qn5} and~\eqref{qn6}, we have that
\begin{align*} \Pr ( W_n > \lambda^{1/2} \tau_n^{-1/2} \tau_{n-1} + 2 \re^{-n} \mid \cF_{n-1} )
& \geq \Exp ( \bar \Phi ( \lambda^{1/2} S_n^{-1/2} / \sqrt{2} + 2 \re^{-n} ) \mid \cF_{n-1} ) - \re^{-n^\eps } \\
& \geq \Exp ( \bar \Phi ( \lambda^{1/2} S_n^{-1/2} / \sqrt{2} ) \mid \cF_{n-1} ) - C \re^{-n} - \re^{-n^\eps } 
.\end{align*}
 Finally, similarly to~\eqref{qn3}, we have
\begin{align*}
 \Exp ( \bar \Phi ( \lambda^{1/2} S_n^{-1/2} / \sqrt{2} ) \mid \cF_{n-1} )
\geq \int_0^{a_n} ( \bar F_S ( ( \bar \Phi^{-1} (u))^{-2} ) - \re^{-2n} )
\geq q - \re^{-2n} - C \re^{-n/2} ,\end{align*}
and this gives $q_n \geq q - \re^{-n^\eps}$, as required.
\end{proof}

\section{Proofs of theorems}
\label{sec:proofs}

With $q = 1/4$ as appearing in Proposition~\ref{prop:turning}, set
$a :=  \frac{1-2q}{q} = 2$.
To prove Theorem~\ref{thm:recurrent} we consider the function defined for $x \in \Z$ and $i \in \{-1,+1\}$ by
\[ f(x,i) := x + a \1{ i = 1}   . \]
We consider $Y_n := f(X_n, \eta_n)$; recall that $(X_n, \eta_n)$ is $\cF_{n-1}$-measurable.
Note that, for all $n \in \ZP$, $|X_n - Y_n | \leq a$.
The next result describes the increments of $Y_n$, and, in particular, 
shows that it is close to a martingale.

\begin{lemma}
\label{lem:lyapunov}
Let  $q_n$ be the $\cF_{n-1}$-measurable random variable defined in~\eqref{q-def}.
\begin{itemize}
\item[(a)] We have that, for all $n \geq 0$,
\begin{equation}
\label{Ybound}
| Y_{n+1} - Y_n | \leq 3, \as \end{equation}
\item[(b)] There is a  sequence $\delta_n$ of non-negative $\cF_{n-1}$-adapted random variables such that,
\begin{equation}
\label{Y-mart}
 \left| \Exp ( Y_{n+1} - Y_n \mid \cF_{n-1} )\right| \leq \delta_n, \as, \end{equation}
for all $n \geq 1$, and, for some $\eps>0$, $\delta_n \leq \re^{-n^\eps}$ for all but finitely many $n$, a.s. In particular, $\sum_{n \geq 1} \delta_n < \infty$, a.s.
\item[(c)] 
We have 
\begin{equation}
\label{Y-second}
 \Exp ( (Y_{n+1} - Y_n)^2 \mid \cF_{n-1} ) = 1 + 8 q_n, \as 
\end{equation}
\end{itemize}
\end{lemma}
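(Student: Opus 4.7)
The plan is to carry out a direct case analysis on the values of the pair $(\eta_n,\eta_{n+1})\in\{-1,+1\}^2$, since $Y_{n+1}-Y_n$ depends only on this pair (given that $X_{n+1}-X_n=\eta_{n+1}$ and $\mathbf{1}\{\eta_{n+1}=1\}-\mathbf{1}\{\eta_n=1\}\in\{-1,0,1\}$). Explicitly, I will tabulate
\[
Y_{n+1}-Y_n \;=\; \eta_{n+1} + a\bigl(\mathbf{1}\{\eta_{n+1}=1\}-\mathbf{1}\{\eta_n=1\}\bigr),
\]
which takes the four values $+1,-1-a,+1+a,-1$ in the cases $(+,+),(+,-),(-,+),(-,-)$ respectively. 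With $a=2$ this gives $|Y_{n+1}-Y_n|\le 3$, proving part~(a).

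For parts~(b) and~(c), I will condition on $\cF_{n-1}$, using that $\eta_n$ is $\cF_{n-1}$-measurable (from the observation in Section~\ref{sec:discrete}) and $\Pr(\eta_{n+1}\ne\eta_n\mid\cF_{n-1})=q_n$. Splitting on the sign of $\eta_n$:
\begin{itemize}
\item If $\eta_n=+1$, then $\Exp(Y_{n+1}-Y_n\mid\cF_{n-1})=1\cdot(1-q_n)+(-1-a)q_n=1-(2+a)q_n$.
\item If $\eta_n=-1$, then $\Exp(Y_{n+1}-Y_n\mid\cF_{n-1})=(1+a)q_n+(-1)(1-q_n)=(2+a)q_n-1$.
\end{itemize}
With $a=2$ both quantities have absolute value $|1-4q_n|=4|q_n-q|$, so I set $\delta_n:=4|q_n-q|$, which is $\cF_{n-1}$-measurable and non-negative. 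Proposition~\ref{prop:turning} then immediately gives $\delta_n\le 4\re^{-n^\eps}$ for all but finitely many $n$, whence $\sum_n\delta_n<\infty$ a.s.; after shrinking $\eps$ slightly to absorb the constant $4$ one obtains~\eqref{Y-mart}.

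For~(c), the same case split yields, in both cases,
\[
\Exp\bigl((Y_{n+1}-Y_n)^2\mid\cF_{n-1}\bigr)=1\cdot(1-q_n)+(1+a)^2q_n=1+\bigl((1+a)^2-1\bigr)q_n,
\]
which with $a=2$ equals $1+8q_n$, giving~\eqref{Y-second} exactly (no error term). There is no serious obstacle here: the lemma is a bookkeeping computation that records the design choice $a=(1-2q)/q$ which makes $Y_n$ an (approximate) martingale precisely because the drift $1-(2+a)q=0$ when $q=1/4$. The only input from the earlier sections is the rate of convergence from Proposition~\ref{prop:turning}, which feeds directly into the bound on $\delta_n$.
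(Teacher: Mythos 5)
Your proof is correct and follows essentially the same route as the paper: a case analysis on whether the server continues ($\eta_{n+1}=\eta_n$) or turns, using that $\eta_n$ is $\cF_{n-1}$-measurable and that the turn has conditional probability $q_n$, with the rate from Proposition~\ref{prop:turning} supplying the bound on $\delta_n$. The paper merely packages the same computation via the notation $\Delta^{\pm}(x,i)$, so there is nothing substantive to add.
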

\begin{proof}
For $x \in \Z$ and $i \in \{-1,+1\}$, define
\begin{align*}
\Delta^+ (x,i) := f(x+i, i) - f(x,i), \text{ and } \Delta^- (x,i) := f (x-i, -i) - f(x,i) .\end{align*}
Then since $X_{n+1} = X_n + \eta_{n+1}$, we have that
\begin{equation}
\label{Y-inc} 
Y_{n+1} - Y_n  = \Delta^+ ( X_n, \eta_n ) \1{ \eta_{n+1} = \eta_n} + \Delta^- (X_n, \eta_n) \1{ \eta_{n+1} \neq \eta_n} .\end{equation}
Note that $\Delta^+ (x, i) =   i$
and
\[ \Delta^- (x, i) =  -i + a \1{i= -1} -a\1 { i =1} = -i -ai .\]
Thus from~\eqref{Y-inc} we have
$| Y_{n+1} - Y_n | = 1 + a \1{ \eta_{n+1} \neq \eta_n}   \leq 3$, a.s., giving~\eqref{Ybound}.
For $q_n$ the $\cF_{n-1}$-measurable random variable defined in~\eqref{q-def}, we have
from~\eqref{Y-inc} that
\[  \Exp ( Y_{n+1} - Y_n \mid \cF_{n-1} ) = (1- q_n) \Delta^+ (X_n , \eta_n) + q_n \Delta^- (X_n , \eta_n) .\]
Since $\Delta^\pm (x,i)$ are uniformly bounded, we have from Proposition~\ref{prop:turning} that
there is an $\cF_{n-1}$-adapted sequence $\eps_n$ with $\delta_n := | \eps_n | \leq \re^{-n^\eps}$
for all but finitely many $n$, such that
\begin{align*}
(1-q_n) \Delta^+ (x,i) + q_n \Delta^- (x,i) = ( 1- q) \Delta^+ (x,i) + q \Delta^- (x,i) + \eps_n .\end{align*}
Here we have that
\begin{align*}
  ( 1- q) \Delta^+ (x,i) + q \Delta^- (x,i) 
& = (1-q) i + q ( -i - ai) \\
& = (1-2q)i - a q i =0 ,
\end{align*}
for all $x$ and all $i$, by choice of $a$. This gives~\eqref{Y-mart}.

For the second moment, note that, by~\eqref{Y-inc}, 
\[ \Exp ( (Y_{n+1} - Y_n)^2 \mid \cF_{n-1} ) = (1-q_n) ( \Delta^+ ( X_n, \eta_n))^2 + q_n ( \Delta^- ( X_n, \eta_n))^2 .\]
Here $(\Delta^+ (x,i))^2 = 1$ and $(\Delta^- (x,i))^2 = (1+a)^2 = 9$, and~\eqref{Y-second} follows.
\end{proof}

The proofs of our two main theorems will use the following martingale decomposition.
Set $\theta_n := \Exp(  Y_{n+1} - Y_n \mid \cF_{n-1} )$ for $n \in \N$.
Note that, by~\eqref{Ybound}, $|\theta_n| \leq 3$, a.s.
As in Doob's decomposition, for $n \geq 1$ let $A_n := \sum_{i=1}^{n-1} \theta_i$, and set $M_n := Y_n - A_n$,
so that
\[ \Exp ( M_{n+1} - M_n \mid \cF_{n-1} ) = \Exp ( Y_{n+1} - Y_n \mid \cF_{n-1} ) - (A_{n+1} -A_n ) = \theta_n - \theta_n = 0 .\]
Thus $M_n$ is an $\cF_{n-1}$-adapted martingale ($n \geq 1$), and
\begin{equation}
\label{M-bound}
| M_{n+1} - M_n | \leq | Y_{n+1} - Y_n | + | \theta _n | \leq 6 , \as 
\end{equation}
Note that $| \theta_n | \leq \delta_n$, a.s., where $\sum_{n \geq 1} \delta_n < \infty$, a.s., by Lemma~\ref{lem:lyapunov}(b).

\begin{proof}[Proof of Theorem~\ref{thm:recurrent}.]
We have that $M_n$ is a martingale with uniformly bounded increments, by~\eqref{M-bound}.
 Theorem~5.3.1 of~\cite{durrett} says that $M_n$
 either oscillates ($\liminf_{n \to \infty} M_n = -\infty$, $\limsup_{n \to \infty} M_n = +\infty$),
or converges ($\lim_{n \to \infty} M_n \to M_\infty \in \R$).
Suppose that $M_n \to M_\infty$. Then
\[ \limsup_{n \to \infty} | Y_n | \leq |M_\infty |+ \limsup_{n \to \infty} \sum_{i=1}^{n-1} |\theta_i|
\leq |M_\infty| + \sum_{i=1}^\infty \delta_i < \infty , \as \]
But since $|Y_n | \geq |X_n| - a$, this contradicts Kurkova and Menshikov's result~\eqref{eq:limsup}, which says that $\limsup_{n \to \infty} | X_n | = \infty$, a.s.
Thus we must have that $M_n$ oscillates, a.s.
 Then
since $\sup_{n \geq 1} |A_n| < \infty$, it follows that $Y_n$ oscillates,  and hence $X_n$ oscillates. Since $X_n \in \Z$
satisfies $| X_{n+1} - X_n | =1$, and it oscillates between $-\infty$ and $+\infty$, we must have $X_n = x$ i.o.~for any $x \in \Z$.
Hence, a.s., for every $x \in \Z$, the set $\{ t \geq 0 : S(t) = x \}$ is unbounded. The result extends to all $x \in \R$ by continuity of the
server's trajectory.
\end{proof}

Now we turn to the proof of Theorem~\ref{thm:growth}.
The next result is essentially an inversion
of Propositions~\ref{prop:lower-bound} and~\ref{prop:upper-bound}.

\begin{lemma}
\label{lem:Nt}
We have that
\[ \lim_{t \to \infty} \frac{N_t}{\log \log t} = \frac{1}{\log 2}, \as \]
\end{lemma}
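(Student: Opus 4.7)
The plan is to invert the doubly-exponential growth bounds of Propositions~\ref{prop:lower-bound} and~\ref{prop:upper-bound} to obtain matching $\liminf$ and $\limsup$ for $N_t / \log\log t$, with the squeeze closing to $1/\log 2$ as the exponent parameters converge to $2$.

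First, fix $\alpha \in (1,2)$ and $\beta \in (2,\infty)$. By Proposition~\ref{prop:lower-bound}, a.s.\ there exists a random $N_0 < \infty$ with $T_n \geq \tau_n \geq \re^{\alpha^n}$ for all $n \geq N_0$. By Proposition~\ref{prop:upper-bound}, a.s.\ there exists a random $N_1 < \infty$ with $T_n \leq \re^{\beta^n}$ for all $n \geq N_1$. Since $N_t \to \infty$ a.s.\ (as noted in Section~\ref{sec:discrete}), eventually $N_t \geq N_0$ and $N_t + 1 \geq N_1$.

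Next I invert. From $T_{N_t} \leq t$ and $T_{N_t} \geq \re^{\alpha^{N_t}}$ (valid once $N_t \geq N_0$), taking logarithms twice yields $\log\log t \geq N_t \log \alpha$, so
\[
N_t \leq \frac{\log\log t}{\log \alpha}.
\]
From $T_{N_t+1} > t$ and $T_{N_t+1} \leq \re^{\beta^{N_t+1}}$ (valid once $N_t + 1 \geq N_1$), we get $\log\log t \leq (N_t+1)\log\beta$, so
\[
N_t \geq \frac{\log\log t}{\log \beta} - 1.
\]
Dividing by $\log\log t$ and sending $t \to \infty$ gives, a.s.,
\[
\frac{1}{\log \beta} \leq \liminf_{t\to\infty} \frac{N_t}{\log\log t} \leq \limsup_{t\to\infty} \frac{N_t}{\log\log t} \leq \frac{1}{\log \alpha}.
\]
Finally, taking a countable sequence $\alpha_k \uparrow 2$ and $\beta_k \downarrow 2$, the intersection over $k$ of the corresponding full-probability events yields
\[
\lim_{t\to\infty} \frac{N_t}{\log\log t} = \frac{1}{\log 2}, \as
\]

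No step here looks like a real obstacle: the growth estimates in Section~\ref{sec:time} do all the heavy lifting, and the remaining work is the elementary two-sided squeeze together with the observation that $N_t \to \infty$ a.s. The only mild care needed is in handling the random indices $N_0, N_1$ and confirming that the limit can be taken outside the countable choice of $\alpha, \beta$.
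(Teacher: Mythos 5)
Your proof is correct and follows essentially the same route as the paper: inverting Propositions~\ref{prop:lower-bound} and~\ref{prop:upper-bound} via the sandwich $T_{N_t} \leq t < T_{N_t+1}$ and letting $\alpha \uparrow 2$, $\beta \downarrow 2$. The extra care you take with the random indices and the countable intersection is fine but not materially different from the paper's argument.
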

\begin{proof}
Let $\alpha \in (1,2)$.
Since $N_t \to \infty$ a.s., we have from Proposition~\ref{prop:lower-bound}
that a.s., for all $t$ sufficiently large,
\[ t \geq T_{N_t} \geq \re^{\alpha^{N_t}} .\]
It follows that $\log \log t \geq N_t \log \alpha$ for all $t$ sufficiently large.
Hence
\[ \limsup_{t \to \infty} \frac{N_t}{\log \log t} \leq \frac{1}{\log \alpha}, \as \]
Since $\alpha \in (1,2)$ was arbitrary, we get  
\[ \limsup_{t \to \infty} \frac{N_t}{\log \log t} \leq \frac{1}{\log 2}, \as \]
On the other hand, for $\beta >2$ we have from Proposition~\ref{prop:upper-bound} that a.s.,
for all $t$ sufficiently large,
\[ t \leq T_{N_t+1} \leq \re^{\beta^{N_t+1}} .\]
It follows that $\log \log t \leq (N_t + 1) \log \beta$ for all $t$ sufficiently large.
Hence
\[   \liminf_{t \to \infty} \frac{N_t}{\log \log t} \geq \frac{1}{\log \beta}, \as \]
Since $\beta >2$ was arbitrary, we get  
\[ \liminf_{t \to \infty} \frac{N_t}{\log \log t} \geq \frac{1}{\log 2}, \as \]
Combined with the $\limsup$ result, this gives the statement in the lemma.
\end{proof}

Next we have an iterated logarithm law for $X_n$.

\begin{lemma}
\label{lem:X}
We have that
\[ \limsup_{n \to \infty} \frac{\pm X_n}{\sqrt{6 n \log \log n}} = 1, \as \]
\end{lemma}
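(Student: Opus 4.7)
The plan is to apply a martingale law of the iterated logarithm to the Doob decomposition $Y_n = M_n + A_n$ constructed just before the proof of Theorem~\ref{thm:recurrent}. By Lemma~\ref{lem:lyapunov}, $(M_n)_{n\ge 1}$ is an $(\cF_{n-1})$-adapted martingale with uniformly bounded increments $|M_{n+1}-M_n|\le 6$ by~\eqref{M-bound}, and combining~\eqref{Y-mart} with~\eqref{Y-second} one obtains
\[ \Var ( M_{n+1} - M_n \mid \cF_{n-1} ) = \Exp ( (Y_{n+1}-Y_n)^2 \mid \cF_{n-1} ) - \theta_n^2 = 1 + 8 q_n - \theta_n^2 . \]
Proposition~\ref{prop:turning} gives $q_n \to 1/4$ a.s., while Lemma~\ref{lem:lyapunov}(b) gives $|\theta_n|\le\delta_n\to 0$ a.s., so the conditional variance tends to $1 + 8\cdot\tfrac14 = 3$ a.s. By Ces\`aro, the predictable quadratic variation $\langle M\rangle_n := \sum_{i=1}^{n-1} \Var(M_{i+1}-M_i \mid \cF_{i-1})$ satisfies $\langle M\rangle_n/n \to 3$ a.s., and in particular $\langle M\rangle_n\to\infty$ a.s.

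Next I would invoke Stout's martingale LIL (Stout, \emph{Ann.\ Math.\ Statist.}~41 (1970), 2158--2160; see also Hall and Heyde, \emph{Martingale Limit Theory}, Ch.~4) applied to both $M_n$ and $-M_n$. With $K_n=6$ constant and $\langle M\rangle_n\to\infty$ linearly, the growth hypothesis $K_n = o((\langle M\rangle_n/\log\log\langle M\rangle_n)^{1/2})$ is immediate, and Stout's theorem yields
\[ \limsup_{n \to \infty} \frac{\pm M_n}{\sqrt{2 \langle M \rangle_n \log \log \langle M \rangle_n}} = 1, \as \]
Substituting the equivalence $\langle M\rangle_n \sim 3n$ a.s.\ (and using continuity of $x\mapsto x\log\log x$) converts this into
\[ \limsup_{n \to \infty} \frac{\pm M_n}{\sqrt{6 n \log \log n}} = 1, \as \]

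Finally, I would transfer the LIL from $M_n$ to $X_n$. Since $\sum_{i\ge 1}|\theta_i|\le \sum_{i\ge 1}\delta_i<\infty$ a.s.\ by Lemma~\ref{lem:lyapunov}(b), the series $A_n = \sum_{i=1}^{n-1}\theta_i$ converges absolutely, so $|Y_n-M_n|=|A_n|$ is bounded by an a.s.\ finite random variable; and $|X_n-Y_n|\le a=2$ deterministically by the definition of $Y_n$. Both perturbations are $o(\sqrt{n\log\log n})$, so the $\limsup$ value $1$ passes from $M_n$ to $Y_n$ to $X_n$, proving the lemma. The main technical point is simply invoking the correct form of the martingale LIL: because $\langle M\rangle_n$ is random, one needs a version allowing random normalization, but the bounded-increment hypothesis together with $\langle M\rangle_n\sim 3n$ a.s.\ makes Stout's result directly applicable.
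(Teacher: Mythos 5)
Your proposal is correct and follows essentially the same route as the paper: the same Doob decomposition $Y_n = M_n + A_n$, the same computation of the conditional second moment $1+8q_n-\theta_n^2 \to 3$ (the paper uses the quantitative bound $|q_n - 1/4|\le \re^{-n^\eps}$ to write $s_n^2 = 3n + O(1)$, where your Ces\`aro argument from mere convergence suffices equally), the same appeal to Stout's martingale LIL for $\pm M_n$ with bounded increments, and the same transfer to $X_n$ via $\sup_n|M_n - X_n| \le a + \sum_i \delta_i < \infty$ a.s.
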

\begin{proof}
First note that
\begin{align*}
 \Exp ( (M_{n+1} -M_n)^2 \mid \cF_{n-1} )  
& = \Exp ( (Y_{n+1} - Y_n -\theta_n)^2 \mid \cF_{n-1} ) \\
& = \Exp ( (Y_{n+1} - Y_n)^2 \mid \cF_{n-1} ) - \theta_n^2,\end{align*}
where, by Lemma~\ref{lem:lyapunov}, $|\theta_n | \leq \delta_n$ a.s., and both $\delta_n$
and $\delta_n^2$ are a.s.~summable.
Thus from~\eqref{Y-second} and Proposition~\ref{prop:turning} we have that
\[  \Exp ( ( M_{n+1} - M_n)^2 \mid \cF_{n-1} ) = 1 + 8 q +\eps_n, \as, \]
where $\eps := \sum_{n \geq 1} \eps_n$ has $|\eps| < \infty$, a.s.
Since $q=1/4$, it follows that
\begin{equation}
\label{sn}
 s^2_n := \sum_{i=1}^n \Exp ( ( M_{i+1} - M_i)^2 \mid \cF_{i-1} ) = 3 n + \eps + o(1), \as\end{equation}
The conditions~\eqref{M-bound} and~\eqref{sn} show that $M_n$ and $-M_n$
each satisfy the martingale law of the iterated logarithm~\cite{stout}, yielding
\[ \limsup_{n \to \infty} \frac{\pm M_n}{\sqrt{6 n \log \log n}} = 1, \as \]
Since $|X_n - Y_n | \leq a$ and $|\theta_n| \leq \delta_n$, we have that
\[ \sup_{n} | M_n - X_n | \leq a+ \sup_{n} | M_n - Y_n | \leq a + \sum_{n=1}^\infty \delta_n < \infty, \as \]
Thus the iterated logarithm law for $M_n$ transfers to $X_n$.
\end{proof}

\begin{proof}[Proof of Theorem~\ref{thm:growth}.]
Since $N_t \to \infty$ as $t \to \infty$,  Lemma~\ref{lem:X} shows that
\[ \limsup_{t \to \infty} \frac{\pm X_{N_t}}{\sqrt{N_t \log \log N_t}} = \sqrt{6}, \as \]
Combining this with Lemma~\ref{lem:Nt} gives
\[  \limsup_{t \to \infty} \frac{\pm X_{N_t}}{\sqrt{\log \log t \log \log \log \log t}} = \sqrt{\frac{6}{\log 2}}, \as \]
We have from~\eqref{eq:S} and the fact that $|X_{n+1}-X_n| = 1$
that 
\[ S(t) \geq \min \{ X_{N_t} , X_{N_t+1} \} \geq X_{N_t} -1, \text{ and } S(t) \leq \max \{ X_{N_t} , X_{N_t+1} \} \leq X_{N_t} + 1. \]
The result follows.
\end{proof}

\appendix

\section{Auxiliary lemmas}
\label{sec:aux}

Recall that $\Phi$ denotes the distribution function of the standard normal distribution.

\begin{lemma}
\label{lem:poisson}
Let $\kappa \geq 0$ and let $\nu \sim P (\kappa)$ and $\nu' \sim P (\kappa)$ be independent. Then
there exists $C \in \RP$ such that, for all $\kappa > 0$,
\[ \sup_{u \in \R} \left| \Pr ( \kappa^{-1/2} ( \nu - \nu' ) \leq u ) - \Phi ( u/\sqrt{2} ) \right| \leq C (1+\kappa)^{-1/2} \log (2+ \kappa) .\]
\end{lemma}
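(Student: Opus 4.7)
The plan is to reduce to the classical Berry--Esseen theorem via an i.i.d.~representation based on infinite divisibility of the Poisson distribution. First I would dispose of bounded $\kappa$ trivially: if $\kappa \leq \kappa_0$ for any fixed $\kappa_0 < \infty$, both $(1+\kappa)^{-1/2}$ and $\log(2+\kappa)$ are bounded below by positive constants, while the left-hand side is at most $1$, so the bound holds provided $C$ is large enough. It therefore suffices to treat $\kappa \geq 1$.

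For $\kappa \geq 1$, I would set $n := \lceil \kappa \rceil$ and $\mu := \kappa/n \in [1/2,1]$, and use infinite divisibility of Poisson to write $\nu \eqd X_1 + \cdots + X_n$ and $\nu' \eqd X'_1 + \cdots + X'_n$ where the $X_i, X'_i$ are i.i.d.~$P(\mu)$. Then $\nu - \nu' \eqd S_n := \sum_{i=1}^n Y_i$ with $Y_i := X_i - X'_i$ i.i.d., $\Exp Y_i = 0$, $\sigma^2 := \Var Y_i = 2\mu \in [1,2]$, and third absolute moment $\rho := \Exp |Y_i|^3$ bounded by a universal constant (since $\mu$ is bounded).

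The classical Berry--Esseen theorem then gives
\[
\sup_{x \in \R} \bigl| \Pr \bigl( S_n \leq x \sqrt{n\sigma^2} \bigr) - \Phi(x) \bigr| \leq \frac{C \rho}{\sigma^3 \sqrt n} \leq \frac{C'}{\sqrt{\kappa}}.
\]
Since $n \sigma^2 = 2\kappa$, the substitution $x = u/\sqrt 2$ identifies the left-hand probability with $\Pr( \kappa^{-1/2}(\nu - \nu') \leq u)$ and $\Phi(x)$ with $\Phi(u/\sqrt 2)$, delivering a bound of order $\kappa^{-1/2}$ uniformly in $u \in \R$. This is in fact strictly stronger than the $(1+\kappa)^{-1/2} \log(2+\kappa)$ claimed in the lemma, so the required inequality follows.

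The only real choice in the argument is the scaling $n = \lceil \kappa \rceil$, which is forced by the need to keep $\mu$, and hence $\sigma$ and $\rho$, bounded away from $0$ and $\infty$; from there, the proof is routine. I expect no serious obstacle. The logarithmic slack in the stated bound presumably reflects an alternative route via Esseen's smoothing inequality applied to the Skellam characteristic function (where such a factor often appears through the Gaussian truncation of the Fourier integral), but it is not needed for the direct Berry--Esseen argument outlined above.
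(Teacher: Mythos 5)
Your proof is correct, and it yields a bound of order $\kappa^{-1/2}$, strictly stronger than the $\kappa^{-1/2}\log\kappa$ stated in the lemma. The overall strategy (Poisson infinite divisibility plus Berry--Esseen) is the same as the paper's, but your decomposition is different in a way that matters. The paper splits $\nu-\nu'$ into $\lfloor\kappa\rfloor$ i.i.d.\ unit-mean Skellam pieces plus two fractional remainders $\gamma,\gamma'\sim P(\kappa-\lfloor\kappa\rfloor)$; absorbing $\gamma-\gamma'$ requires shifting the argument of $\Phi$ by $r=\log\kappa$ (so that $\Pr(\gamma\geq r)=O(\kappa^{-1})$), and this shift, divided by $\kappa^{1/2}$, is precisely what produces the logarithmic factor. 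It also forces a separate treatment of the tails $|u|>\kappa^{1/2}\log\kappa$ to control the $\kappa^{-1}|u|$ error from rescaling $\lfloor\kappa\rfloor^{-1/2}$ to $\kappa^{-1/2}$. By instead cutting $\kappa$ into $n=\lceil\kappa\rceil$ \emph{equal} pieces of mean $\mu=\kappa/n\in[1/2,1]$, you arrange that $n\sigma^2=2\kappa$ exactly, so there is no remainder to absorb and no rescaling mismatch; the Berry--Esseen constant stays uniform because $\sigma^2=2\mu\geq 1$ and $\Exp|Y_1|^3$ is bounded for $\mu\leq 1$. The only check worth making explicit is that $\mu\geq 1/2$ indeed holds for $\kappa\geq 1$ (since $\lceil\kappa\rceil\leq 2\kappa$ there), which you have. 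Since the lemma is only used in the paper with a $\log$ factor to spare, either version suffices, but yours is the cleaner argument.
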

\begin{proof}
Let $u_\kappa := \kappa^{1/2} \log \kappa$.
Then, by symmetry,
\begin{align*} 
\sup_{u : |u| > u_\kappa} \left| \Pr ( \kappa^{-1/2} ( \nu - \nu' ) \leq u ) - \Phi ( u/\sqrt{2} ) \right|
& 
= \sup_{u : u > u_\kappa} \left| \Pr ( \kappa^{-1/2} ( \nu - \nu' ) > u ) - \bar \Phi ( u/\sqrt{2} ) \right| \\
& \leq  \Pr ( \kappa^{-1/2} ( \nu - \nu' ) > u_\kappa ) +  \bar \Phi ( u_\kappa/\sqrt{2} )
.\end{align*}
Here we have from standard Gaussian tail bounds (see e.g.~Theorem~1.2.3 of~\cite{durrett}) that
$\bar \Phi ( u_\kappa/\sqrt{2} ) = O ( \re^{-\kappa} )$, say, while, since $\nu' \geq 0$,
\[ \Pr ( \kappa^{-1/2} ( \nu - \nu' ) > u_\kappa ) \leq
 \Pr ( \nu > \kappa^{1/2} u_\kappa )  = O (\re^{-\kappa} ) , \]
by Poisson large deviations bounds (see e.g.~\cite[p.~17]{penrose}).
The result in the lemma will thus follow from the claim
that there exists $C \in \RP$ for which
\begin{equation}
\label{claim}
\sup_{ u : |u| \leq u_\kappa} \left| \Pr ( \kappa^{-1/2} ( \nu - \nu' ) \leq u ) - \Phi ( u/\sqrt{2} ) \right|
\leq C \kappa^{-1/2} \log \kappa  ,\end{equation}
for all $\kappa \geq 2$.
It remains to prove~\eqref{claim}.

By Poisson additivity, we can write
\[ \nu - \nu' =\sum_{j = 1}^{\lfloor \kappa \rfloor} \xi_j + \gamma - \gamma' ,\]
where $\gamma, \gamma', \xi_1, \xi_2,\ldots$ are independent  random variables,
each $\xi_j$ being the difference of two independent $P(1)$ random variables, 
and $\gamma, \gamma'$ being Poisson with mean $\kappa - \lfloor \kappa \rfloor < 1$.
Let $S_\kappa := \sum_{j=1}^{\lfloor \kappa \rfloor} \xi_j$.
Then $\Exp \xi_j = 0$, $\Exp ( \xi_j^2 ) = 2$, and $\Exp ( |\xi_j|^3 ) < \infty$, so the Berry--Esseen theorem (see e.g.~Theorem~3.4.9 of~\cite[p.~137]{durrett}) implies that
\begin{equation}
\label{berry}
 \sup_{u \in \R} \left| \Pr \left( {\lfloor \kappa \rfloor} ^{-1/2} S_\kappa 
\leq u \right)
- \Phi ( u /\sqrt{2} ) \right | \leq C \kappa^{-1/2} ,\end{equation}
for all $\kappa \geq 1$. 

First we prove one half of~\eqref{claim}. Since $\gamma \geq 0$,
\begin{align*}
\Pr ( \nu - \nu ' \leq u \kappa^{1/2} ) & \leq \Pr ( S_\kappa - \gamma' \leq u \kappa^{1/2} ) \\
& \leq \Pr ( S_\kappa \leq u \kappa^{1/2} + r ) + \Pr (\gamma' \geq r ) ,
\end{align*}
for any $r >0$.
Here we have that 
\begin{align*}
\Pr ( S_\kappa \leq u \kappa^{1/2} + r ) & = \Pr \left( \lfloor \kappa \rfloor^{-1/2} S_\kappa
\leq \left( \frac{\kappa}{\lfloor \kappa \rfloor} \right)^{1/2} u + \lfloor \kappa \rfloor^{-1/2} r \right) .\end{align*}
Note that, by Taylor's theorem,
\[ \left( \frac{\kappa}{\lfloor \kappa \rfloor} \right)^{1/2} \leq 
 \left( \frac{\lfloor \kappa \rfloor +1}{\lfloor \kappa \rfloor} \right)^{1/2} 
\leq 1 + C \kappa^{-1} ,\]
for some $C \in \RP$ and all $\kappa \geq 1$. Thus, by~\eqref{berry}
and the fact that the standard normal density is uniformly bounded,
\begin{align*}
\Pr ( S_\kappa \leq u \kappa^{1/2} + r ) & \leq \Phi ( u /\sqrt{2} )
+ C \kappa^{-1} |u| + C \kappa^{-1/2} r + C \kappa^{-1/2} ,
\end{align*}
for all $u$ and all $\kappa \geq 1$.
In particular, taking $r = \log \kappa$ we have
\[ \Pr ( S_\kappa \leq u \kappa^{1/2} + \log \kappa )  \leq \Phi ( u /\sqrt{2} )
+ C \kappa^{-1/2} \log \kappa , \text{ for all } u \in [-u_\kappa, u_\kappa ] ,\]
where $C< \infty$ does not depend on $u$ or $\kappa$.
On the other hand,
$\Pr (\gamma' \geq \log \kappa ) \leq \Pr ( P (1) \geq \log \kappa) = O (\kappa^{-1})$
by Poisson large deviations bounds (see e.g.~\cite[p.~17]{penrose}). This establishes one half of~\eqref{claim}.

For the other direction, we have that
\begin{align*}
\Pr ( \nu - \nu ' \leq u \kappa^{1/2} ) & \geq \Pr ( S_\kappa + \gamma \leq u \kappa^{1/2} ) \\
& \geq \Pr ( S_\kappa \leq u \kappa^{1/2} - r ) - \Pr (\gamma \geq r ) ,
\end{align*}
Taking $r = \log \kappa$ we get $\Pr (\gamma \geq \log \kappa ) = O (\kappa^{-1})$ and,
similarly to above, we get
\[ \Pr ( S_\kappa \leq u \kappa^{1/2} - \log \kappa ) \geq  \Phi ( u /\sqrt{2} )
- C \kappa^{-1/2} \log \kappa , \text{ for all } u \in [-u_\kappa, u_\kappa ] ,\] 
completing the proof of~\eqref{claim}.
\end{proof}

Finally, we record the following elementary result.

\begin{lemma}
\label{lem:reduced}
Let $X, Y$ be   random variables. Then for any $x \in \R$, 
\[ \Pr (X > x \mid X \leq Y) \leq \Pr (X > x). \]
\end{lemma}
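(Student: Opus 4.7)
The plan is to reduce the inequality to the negative correlation of a non-decreasing and a non-increasing function of a single random variable. (Implicit in the statement is that $X$ and $Y$ are independent; without this, easy counterexamples exist, and in the application to Lemma~\ref{lem:Q-big} the relevant queue lengths come from independent processes.)

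First I would rewrite the claim as
\[ \Pr(X > x,\, X \leq Y) \leq \Pr(X > x)\,\Pr(X \leq Y), \]
which is the standard definition-of-conditional-probability rearrangement. Conditioning on $X$ and using independence, set $g(t) := \Pr(Y \geq t)$, which is a non-increasing function of $t \in \R$. Then
\[ \Pr(X > x,\, X \leq Y) = \Exp\bigl[\1{X > x}\, g(X)\bigr], \qquad \Pr(X \leq Y) = \Exp[g(X)], \]
so the desired inequality becomes
\[ \Cov\bigl(\1{X > x},\, g(X)\bigr) \leq 0. \]

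Second, I would establish this covariance bound via the classical two-copies identity: let $X_1, X_2$ be i.i.d.\ copies of $X$, and observe
\[ 2\Cov\bigl(\1{X > x},\, g(X)\bigr) = \Exp\bigl[(\1{X_1 > x} - \1{X_2 > x})(g(X_1) - g(X_2))\bigr]. \]
Since $t \mapsto \1{t > x}$ is non-decreasing and $g$ is non-increasing, the two factors inside the expectation have opposite (weak) signs for every realisation of $(X_1, X_2)$, so the integrand is pointwise non-positive. This yields the required inequality and completes the proof.

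There is no real obstacle here; the argument is a one-line application of the fact that monotone functions of the same random variable, with opposite monotonicity, are negatively correlated. The only point worth being careful about is the implicit independence assumption, which is satisfied in the setting where the lemma is invoked.
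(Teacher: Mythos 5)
Your proof is correct, but it takes a different route from the paper's. The paper fixes a deterministic threshold $y$ and computes the conditional distribution function directly: $\Pr(X \leq x \mid X \leq y) = \Pr(X \leq \min\{x,y\})/\Pr(X \leq y)$, which equals $1$ if $x \geq y$ and $\Pr(X\leq x)/\Pr(X\leq y) \geq \Pr(X \leq x)$ if $x \leq y$; it then concludes by averaging over the law of $Y$ (this last step, like yours, implicitly uses independence of $X$ and $Y$). You instead condition on $X$, reduce the claim to $\Cov\bigl(\1{X>x},\,g(X)\bigr)\leq 0$ with $g(t)=\Pr(Y\geq t)$ non-increasing, and finish with the two-copies symmetrisation identity --- i.e.\ Chebyshev's correlation inequality for oppositely monotone functions of one variable. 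Both arguments are sound and of comparable length; the paper's is marginally more elementary (no covariance identity needed), while yours isolates a reusable general principle and makes the role of the monotonicity of $t\mapsto\Pr(Y\geq t)$ explicit. You are also right that independence is tacitly assumed: without it the statement fails (take $Y$ a function of $X$ that exceeds $X$ exactly when $X$ is large), and in the application to Lemma~\ref{lem:Q-big} the comparison queue is indeed driven by an independent arrival stream, conditionally on the relevant history.
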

\begin{proof}
For $x, y \in \R$ we have
\[ \Pr (X \leq x \mid X \leq y) = \frac{\Pr (X \leq \min \{ x , y\})}{\Pr (X \leq y)} = \begin{cases} 
1 & \text{if } x \geq y ,\\
\frac{\Pr (X \leq x)}{\Pr ( X \leq y)} & \text{if } x \leq y .\end{cases}
\]
In any case, we have $\Pr (X \leq x \mid X \leq y) \geq \Pr (X \leq x)$, and the result follows.
\end{proof}

\section*{Acknowledgements}

Some of this work was carried out while the first author was supported by a Grey Fellowship
at Durham University; the hospitality of Grey College and the Department of Mathematical Sciences
at Durham University
is gratefully acknowledged. The first author was also partially supported by the Heilbronn Institute
for Mathematical Research.

\end{document}